\newtheorem{thm}{Theorem}[section]
\newtheorem*{thmA}{Theorem A}
\newtheorem*{thmB}{Theorem B}
\newtheorem*{thmC}{Theorem C}
\newtheorem{lem}[thm]{Lemma}
\newtheorem*{lemA}{Lemma A}
\newtheorem{prop}[thm]{Proposition}
\newtheorem*{prop*}{Proposition}
\newtheorem{cor}[thm]{Corollary}
\theoremstyle{definition}
\newtheorem{dfn}[thm]{Definition}
\newtheorem*{dfn*}{Definition}
\newtheorem{exa}[thm]{Example}
\newtheorem{rmk}[thm]{Remark}
\newcounter{num}
\newcommand{\Rnum}[1]{\setcounter{num}{#1} \Roman{num}}
\begin{document}

\title[On Rigidity of Lattice of Topologies on Vector Spaces]{On Rigidity of Lattice of Topologies\\ on Vector Spaces}
\author{Takanobu Aoyama}
\address{Department of Mathematics, Graduate School of Science, Osaka University, 1-1 Machikaneyama-cho Toyonaka, Osaka 560-0043, Japan}
\email{t-aoyama@cr.math.sci.osaka-u.ac.jp}
\subjclass[2020]{Primary~54A10, Secondary~57N17, 51A10}
\keywords{Lattice of Topologies, Topological Vector Space, Lattice Isomorphism, The Fundamental Theorem of Affine Geometry, The Fundamental Theorem of Projective Geometry}
\maketitle
%-----------------------------------------------------------

\begin{abstract}
A vector topology on a vector space over a topological field is a (not necessarily Hausdorff) topology by which the addition and scalar multiplication are continuous. \par
We prove that, if an isomorphism between the lattice of topologies of two vector spaces preserves vector topologies, then the isomorphism is induced by a translation, a semilinear isomorphism and the complement map. As a consequence, if such an isomorphism exists, the coefficient fields are isomorphic as topological fields and these vector spaces have the same dimension. \par
We also prove a similar rigidity result for an isomorphism between the lattice of vector topologies which preserves Hausdorff vector topologies.\par
These results are obtained by using the fundamental theorems of affine and projective geometries.
\end{abstract}

\section{Introduction}
\subsection{Motivation and main results}
One of the interesting phenomenon in general topology is a space we consider may have many different topologies respecting the structure of the space. To compare two topologies, it is natural to consider which topology has more open subsets of the space than the other. The main object of this paper is the {\it lattice of topologies}, which is the set $\Sigma(X)$ of all topologies on an  arbitrary set $X$ endowed with the partial order by the inclusion order $\subset$. It is known that every pair of two elements of this partially ordered set has unique supremum and infimum, called {\it lattice structure} and it is studied by G. Birkhoff \cite{G.Bir}.
We focus on lattice isomorphisms between $\Sigma(X)$ and $\Sigma(Y)$, and on the group of lattice automorphisms denoted by ${\rm Aut}(\Sigma(X))$, motivated by a question to what extent the lattice structure of $\Sigma(X)$ determines $X$.
\par
Let us give two basic examples of lattice automorphisms between lattice of topologies.  The first example is the {\it induced map} $\theta_*:\Sigma(X)\rightarrow\Sigma(X)$ by an arbitrary bijection $\theta:X\rightarrow X$ defined by
\begin{align*}
\theta_*(T)=\{V\subset X\mid \theta^{-1}(V)\in T\},~T\in\Sigma(X).
\end{align*}
If the set $X$ is a finite set, the other example is the {\it complement map} $C:\Sigma(X)\rightarrow \Sigma(X)$ defined by
\begin{align*}
C(T)=\{X\setminus U\mid U\in T\},~T\in\Sigma(X).
\end{align*}
J. Hartmanis showed in \cite{J.Har},  that every lattice automorphism is $\theta_*$ or the composition $C\circ \theta_*$:
\begin{thm}[{\cite[Theorem 4]{J.Har}}]
For the lattice of topologies $\Sigma(X)$, we have 
\begin{itemize}
\item if the cardinality $|X|$ of $X$ is $1$,$2$ or infinite, then the group ${\rm Aut}(\Sigma(X))$ is isomorphic to the symmetric group of $X$, and
\item if $|X|$ is finite and more than $2$, then the group ${\rm Aut}(\Sigma(X))$ is isomorphic to the direct product of the symmetric group of $X$ and $2$-element group. 
\end{itemize} 
\end{thm}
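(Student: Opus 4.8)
The plan is to exploit that $\Sigma(X)$ is a complete, \emph{atomistic} lattice, so that any automorphism is determined by its action on atoms. For each proper nonempty subset $A\subsetneq X$ the collection $a_A:=\{\emptyset,A,X\}$ is a topology, and these are precisely the atoms of $\Sigma(X)$; writing $\mathcal{P}'(X):=\{A:\emptyset\ne A\subsetneq X\}$, the map $A\mapsto a_A$ is a bijection onto the atom set. Since any topology $T$ is the join (generated topology) of the atoms $a_A$ with $A\in T$, every element is a join of atoms below it; as an order-isomorphism of a complete lattice preserves arbitrary joins and sends atoms to atoms, a given automorphism $\Phi$ is determined by the induced bijection $\sigma\colon\mathcal{P}'(X)\to\mathcal{P}'(X)$, $\Phi(a_A)=a_{\sigma(A)}$. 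A direct computation records the two model automorphisms at the level of atoms: $\theta_*(a_A)=a_{\theta(A)}$ (push-forward) and $C(a_A)=a_{X\setminus A}$ (complementation).

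I would next distil a combinatorial invariant that $\sigma$ must respect. Computing $a_A\vee a_B$ as the topology $\{\emptyset,A,B,A\cap B,A\cup B,X\}$ generated by $A$ and $B$, one sees that, for $C\ne A,B$, the relation $a_C\le a_A\vee a_B$ holds exactly when $C\in\{A\cap B,A\cup B\}$. This yields a ternary relation $R(A,B;C)$ on $\mathcal{P}'(X)$, together with the coarser graph $G$ in which $a_A,a_B$ are adjacent iff $a_A\vee a_B$ has exactly two atoms below it; unwinding the definitions, $G$-adjacency means $A\subsetneq B$, $B\subsetneq A$, or $B=X\setminus A$. Both $R$ and $G$ are lattice invariants, hence preserved by $\sigma$, and both are invariant under push-forward and under complementation --- which is exactly why $\theta_*$, $C$ and their composite are automorphisms.

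The crux, and the step I expect to be the \emph{main obstacle}, is the converse combinatorial rigidity: every bijection $\sigma$ of $\mathcal{P}'(X)$ preserving $R$ is a push-forward $A\mapsto\theta(A)$, or a push-forward composed with complementation $A\mapsto X\setminus\theta(A)$, for some $\theta\in\mathrm{Sym}(X)$. For finite $X$ with $|X|=n\ge 3$ I would argue as follows. The $G$-degree of $a_A$ with $|A|=k$ equals $2^k+2^{n-k}-3$, which is maximized precisely at $k\in\{1,n-1\}$, so $\sigma$ preserves the set $S$ of singletons and cosingletons; on $S$ the graph $G$ is complete bipartite between singletons and cosingletons, whose unique bipartition $\sigma$ either fixes or swaps. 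Composing with $C$ if it swaps, I may assume $\sigma(\{x\})=\{\theta(x)\}$ for a permutation $\theta$, and then the images of cosingletons are pinned to $\theta$ as well by relating each cosingleton, through $R$, to the non-cosingleton sets already determined. Finally, for a set $A$ that is not a cosingleton the condition $x\in A$ is exactly the $G$-adjacency of $a_{\{x\}}$ to $a_A$, so $A$ is recovered from its singleton-neighbours and $\sigma(A)=\theta(A)$ throughout. The delicate points are that the meet/join symmetry can be broken only up to the global complement (the source of the two cases), and that for infinite $X$, where the degree count degenerates, the extreme elements must be singled out by a different, order-minimality-type characterization; isolating all of this as a stand-alone lemma is where the real work lies.

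Finally I would assemble $\mathrm{Aut}(\Sigma(X))$. The assignment $\theta\mapsto\theta_*$ is an injective homomorphism $\mathrm{Sym}(X)\hookrightarrow\mathrm{Aut}(\Sigma(X))$ (injective since $\theta_*$ already determines $\theta$ on singletons), and by the rigidity lemma every automorphism acts on atoms as a push-forward or as complementation after a push-forward. For infinite $X$ the complementation case cannot occur: an automorphism $\Phi$ acting as $\sigma(A)=X\setminus\theta(A)$ would send the cofinite topology (the join of its cofinite-set atoms) to the join of the corresponding finite-set atoms, namely the discrete topology $\hat 1$, which $\Phi$ also fixes, contradicting injectivity; hence $\mathrm{Aut}(\Sigma(X))\cong\mathrm{Sym}(X)$. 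For finite $X$ with $|X|\ge 3$, $C$ is a genuine automorphism outside the image of $\theta\mapsto\theta_*$ (complementation turns a $k$-set into an $(n-k)$-set, which no cardinality-preserving push-forward can realise once some $k\ne n-k$); it is an involution commuting with every $\theta_*$, giving $\mathrm{Aut}(\Sigma(X))\cong\mathrm{Sym}(X)\times\mathbb{Z}/2\mathbb{Z}$. For $|X|\le 2$ a direct inspection finishes the proof: when $|X|=2$, $\Sigma(X)$ is the four-element diamond and $C$ coincides with the transposition's $\theta_*$, so no extra factor appears.
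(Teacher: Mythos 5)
Your outline follows essentially the same route as the paper: reduce to the action on atoms $a_A=\{\emptyset,A,X\}$, read off combinatorial invariants from the pairwise join $a_A\vee a_B=\{\emptyset,A\cap B,A,B,A\cup B,X\}$, isolate the singleton and cosingleton atoms, split into the ``push-forward'' and ``push-forward after complement'' cases, and reconstruct $\theta$ from the singletons. Your exclusion of the complement case for infinite $X$ (cofinite topology $\mapsto$ discrete topology, clashing with the fixed top element) and your assembly of the group at the end are both correct and match the paper.

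The genuine gap is exactly the one you flag yourself: for infinite $X$ you have no characterization of the singleton/cosingleton atoms, and your degree count $2^k+2^{n-k}-3$ in the graph $G$ has no infinite analogue (all degrees degenerate to $2^{|X|}$, so maximality of degree singles out nothing). Since the infinite case is one of the two cases of the theorem, the proof is incomplete as it stands. The paper's fix is to replace the degree of $a_A$ in $G$ by the \emph{type} $t(a_A,a_B)$, the number of atoms below $a_A\vee a_B$, which is always at most $4$: if $A$ is a singleton or a cosingleton then $t(a_A,a_B)\le 3$ for every $B$, whereas for any other $A$ the choice $B=\{x_1,x_2\}$ with $x_1\in A$, $x_2\notin A$ gives $t(a_A,a_B)=4$ (the four distinct atoms $A$, $B$, $\{x_1\}$, $A\cup\{x_2\}$). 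This criterion is cardinality-independent, so it identifies $\mathfrak{n}\cup\mathfrak{m}$ uniformly; within that set your complete-bipartite observation (equivalently, the type-$2$ versus type-$3$ dichotomy) then separates singletons from cosingletons exactly as you describe. A secondary, more cosmetic looseness: your recovery of $\sigma(A)=\theta(A)$ via singleton-neighbours does not apply to cosingletons (every cosingleton is $G$-adjacent to every singleton), and your appeal to $R$ there should be made explicit; the paper avoids the case split by using the single relation $a_A\le\bigvee_{x\in A}a_{\{x\}}$, whose right-hand side is $\mathcal{P}(A)\cup\{X\}$, for all $A$ at once.
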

Slight modifications of the proof in \cite{J.Har} of the above theorem shows that a lattice isomorphism between the lattice of topologies of two possibly  different sets must be given by a bijection and the complement map:
\begin{lemA}[Lemma \ref{Hartmanis}]
Let $X$ and $Y$ be sets. Suppose that there exists a lattice isomorphism $\Theta:\Sigma(X)\rightarrow\Sigma(Y)$ between their lattices of topologies. Then, there exists a unique bijection $\theta:X\rightarrow Y$ such that  
\begin{itemize}
\item if $|X|$ is $1,2$ or infinity, then $\Theta$ is equal to $\theta_*$, and 
\item if $|X|$ is finite and more than $2$, then $\Theta$ is equal to $\theta_*$ or $C\circ\theta_*$. 
\end{itemize}
\end{lemA}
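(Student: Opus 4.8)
The plan is to adapt Hartmanis's argument, whose key observation never actually uses that the source and target sets coincide, so it applies verbatim to a lattice isomorphism $\Theta\colon\Sigma(X)\to\Sigma(Y)$ between two possibly different sets. First I would record the structural facts that drive everything: $\Sigma(X)$ is a complete lattice (arbitrary intersections of topologies are topologies, and suprema are generated topologies), and it is \emph{atomistic}, with atoms exactly the topologies $\alpha_A=\{\emptyset,A,X\}$ for proper nonempty $A\subseteq X$; indeed any topology $T$ is the join of the $\alpha_A$ with $A\in T$ proper nonempty. Since an order isomorphism between complete lattices preserves arbitrary suprema and the least element, $\Theta$ preserves the indiscrete topology, sends atoms to atoms, and is completely determined by its action on atoms. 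Thus $\Theta$ induces a bijection $\phi$ from the proper nonempty subsets of $X$ to those of $Y$ via $\Theta(\alpha_A)=\alpha_{\phi(A)}$, and $\Theta(T)$ is the topology generated by $\{\phi(A):A\in T\ \text{proper nonempty}\}$.

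Next I would recover the set-theoretic relations between subsets from the lattice, working with the join $\alpha_A\vee\alpha_B$, which is the topology generated by $\{A,B\}$. Its proper nonempty open sets are among $A$, $B$, $A\cap B$, $A\cup B$, so the number $n(A,B)$ of atoms lying below $\alpha_A\vee\alpha_B$ is a lattice invariant that equals $2$ precisely when $A,B$ are nested ($A\subsetneq B$ or $B\subsetneq A$) or complementary ($B=X\setminus A$). To separate these two possibilities I would use a purely order-theoretic criterion: a compatible pair (one with $n(A,B)=2$) is complementary if and only if no third proper nonempty set is compatible with both $A$ and $B$. A short computation shows complementary pairs never admit such a common neighbor, while nested pairs always do once $|X|\geq 4$. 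Removing the complementary pairs then leaves exactly the strict-inclusion comparability relation, so $\phi$ descends to an isomorphism of the comparability graph of the poset $\bigl(\mathcal{P}(X)\setminus\{\emptyset,X\},\subseteq\bigr)$ onto that of $Y$.

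The rigidity of this comparability graph is the crux, and it is where the complement map enters. Writing $n=|X|$, the comparability degree of a $k$-element set is $2^{k}+2^{n-k}-4$, which for $n\geq 4$ is strictly maximized at $k=1$ and $k=n-1$; hence the maximum-degree vertices are exactly the singletons and cosingletons, a set that $\phi$ must preserve. The induced subgraph on this set is the complete bipartite graph $K_{n,n}$ with the perfect matching of complementary pairs deleted, whose automorphism group (for $n\geq 3$) merely permutes each side and possibly interchanges the two sides. In the side-preserving case $\phi$ carries singletons to singletons, yielding a bijection $\theta\colon X\to Y$ with $\phi(\{x\})=\{\theta(x)\}$; reading off comparabilities with singletons gives $x\in A\Leftrightarrow \theta(x)\in\phi(A)$, so $\phi(A)=\theta(A)$ and therefore $\Theta=\theta_*$. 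In the side-interchanging case $\phi$ sends singletons to cosingletons, the same bookkeeping gives $\phi(A)=Y\setminus\theta(A)$, whence $\Theta=C\circ\theta_*$; for this to be a genuine lattice isomorphism the complement map on $Y$ must exist, forcing $Y$ (and hence $X$) finite. Conversely, when $|X|$ is infinite only the side-preserving case survives, so $\Theta=\theta_*$, and $|X|=1,2$ together with $3\le|X|<\infty$ are settled, respectively, by the triviality of $\Sigma(X)$ for small $X$ and by direct inspection, which also accounts for the coincidence $C=(\text{swap})_*$ when $|X|=2$. Uniqueness of $\theta$ is immediate from its action on singletons.

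The main obstacle I anticipate is precisely the middle step: the \emph{local} lattice structure around a four-element join $\{\emptyset,A,B,X\}$ is identical for nested and for complementary pairs (both lower intervals are the diamond $M_2$), so distinguishing them requires a genuinely global criterion, and the residual order/orientation ambiguity it leaves is exactly what produces the two cases $\theta_*$ and $C\circ\theta_*$ and the finiteness hypothesis. Handling the small cardinalities $|X|\in\{1,2,3\}$ separately, where the degree and graph-rigidity estimates degenerate, is a routine but necessary bookkeeping task.
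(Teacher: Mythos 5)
Your overall architecture is close in spirit to the paper's: both arguments work with the atoms $\{\emptyset,A,X\}$ and with the invariant ``number of atoms below the join of two atoms'' (the paper's \emph{type} function $t$), and your finite-case endgame --- separating complementary from nested pairs via common compatible neighbours, recovering the comparability graph, the crown graph $K_{n,n}$ minus a perfect matching, the reconstruction $x\in A\Leftrightarrow\theta(x)\in\phi(A)$, and the reduction of the side-swapping case to $C\circ\theta_*$ --- is correct, as is your observation that the swap case forces $X$ finite (made precise, as in the paper, by noting that the join of all singleton atoms is the discrete topology, hence the maximum, while the join of all cosingleton atoms on an infinite set is only the cofinite-type topology).

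The genuine gap is the step identifying the singletons and cosingletons when $X$ is infinite. Your criterion is that they are the vertices of maximum comparability degree $2^{k}+2^{n-k}-4$; this is finite cardinal arithmetic and is false for infinite $X$: every proper nonempty $A\subseteq X$ has at least one of $|A|$, $|X\setminus A|$ equal to $|X|$, hence comparability degree exactly $2^{|X|}$, so \emph{all} vertices have the same degree and the degree function distinguishes nothing. Since the infinite case is precisely the one where you must conclude $\Theta=\theta_*$, this is not peripheral. The paper avoids the issue with a cardinality-free criterion: for $|X|\geq 4$, an atom $p=A(D)$ satisfies $t(p,q)\leq 3$ for \emph{every} atom $q$ if and only if $D$ is a singleton or cosingleton (if $D=\{x\}$ then $D\cap D'\in\{\emptyset,D\}$ for all $D'$, so the join contributes at most one new atom; dually for cosingletons; whereas if $2\leq|D|$ and $2\leq|X\setminus D|$ the witness $D'=\{x_1,x_2\}$ with $x_1\in D$, $x_2\notin D$ yields four atoms below the join). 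For $|X|=3$ every proper nonempty subset is a singleton or cosingleton, so nothing needs identifying. If you replace your degree count by this criterion --- which is expressible entirely in your own language of the invariant $n(A,B)$ --- and keep the separate treatment of $|X|\leq 3$ you already flag, your proof goes through for all cardinalities.
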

An easy consequence of this lemma is that the lattice structure of $\Sigma(X)$ determines the cardinality of $X$ (Corollary \ref{Corollary of Hartmanis}). \par
One of the results of this paper is analogous to the above theorem in the linear algebraic setting: Let $X$ be a vector space over a topological field $K$. The dimension $\dim_KX$ of $X$ may be infinite. We denote by $\tau_K(X)$, the set of all topologies on $X$ such that $X$ becomes a topological vector space. This is the set of topologies on $X$ by which the addition and scalar multiplication are continuous. We call an element of $\tau_K(X)$ {\it vector topology} (Definition \ref{topological vector space}). Note that the poset $\tau_K(X)$ with the inclusion is also a lattice. 
In this setting, let $X$ and $Y$ be two vector spaces over topological fields $K$ and $L$, respectively, with the dimension of $X$ more than $1$. The main theorem asserts that a lattice isomorphism between $\Sigma(X)$ and $\Sigma(Y)$ which preserves the sets of vector topologies is induced by some composition of a semilinear isomorphism, a translation and $C$. Here, a {\it semilinear isomorphism} is a linear isomorphism which is allowed to twist a scalar multiplication by a fixed field isomorphism (Definition \ref{semilinear isomorphism}).

\begin{thmA}[Theorem \ref{main thm}]
Let $X$ and $Y$ be vector spaces over $K$ and $L$, respectively, with $\dim_K X\geq2$. Suppose that there is a lattice isomorphism  $\Phi:\Sigma(X)\rightarrow\Sigma(Y)$ with $\Phi(\tau_K(X))=\tau_L(Y)$. Then, there exist a unique triple $(\psi,\phi,y_0)$ consisting of an isomorphism $\psi:K\rightarrow L$ between the topological fields, $\psi$-semilinear isomorphism $\phi:X\rightarrow Y$ and an element $y_0\in Y$ such that
\begin{itemize}
\item if $|X|$ is infinity, then $\Phi=(\phi+y_0)_*$, and
\item if $|X|$ is finite, then $\Phi=(\phi+y_0)_*$ or $\Phi=C\circ(\phi+y_0)_*$.
\end{itemize}
\end{thmA}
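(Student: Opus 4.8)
The plan is to first apply Lemma \ref{Hartmanis} to obtain the underlying bijection $\theta\colon X\to Y$, and then to show that the extra hypothesis $\Phi(\tau_K(X))=\tau_L(Y)$ forces $\theta$ to be affine–semilinear, after which the fundamental theorems of affine and projective geometry finish the job. Since $\dim_K X\ge 2$ we have $|X|=|K|^{\dim_K X}\ge 4$, so Lemma \ref{Hartmanis} applies in its ``finite, $>2$'' or ``infinite'' form and produces a unique $\theta$ with $\Phi=\theta_*$, or $\Phi=C\circ\theta_*$ in the finite case; in particular the stated dichotomy is already built in, and it remains to analyse $\theta$. The basic local fact I will use is that in any topological vector space $(X,T)$ the closure of a point is the affine coset $\overline{\{a\}}=a+\overline{\{0\}}^{\,T}$, where $\overline{\{0\}}^{\,T}$ is a linear subspace $W_T\subset X$; thus the point-closures of a vector topology are exactly the cosets of one subspace.

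By definition $\theta\colon (X,T)\to (Y,\theta_*T)$ is a homeomorphism for every $T$, so it carries point-closures to point-closures. When $\Phi=\theta_*$ this means $\theta_*T$ is a vector topology and $\theta$ maps the $W_T$-cosets onto the $W_{\theta_*T}$-cosets. When $\Phi=C\circ\theta_*$ (finite case) I will check that $\theta_*T=C(S)$ for a vector topology $S$ on $Y$, and that the point-closures of $C(S)$ are again the cosets of $\overline{\{0\}}^{\,S}$; indeed on a finite topological group the smallest neighbourhood of $0$ coincides with $\overline{\{0\}}$, so passing to $C(S)$ dualises the open and closed sets but leaves the coset partition intact, and the same conclusion holds. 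To make this usable I need every subspace $W\subset X$ to occur as some $W_T$. Here I use that the closure of $0$ in the topological field $K$ is a closed ideal, hence $0$ or $K$: excluding the degenerate indiscrete field, $K$ is Hausdorff, and then $X/W$ carries a Hausdorff vector topology (for instance the coordinatewise topology from a basis), whose pullback is a vector topology with $W_T=W$.

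Consequently, setting $\Psi(W):=\theta(W)-\theta(0)$, the map $\theta$ sends the cosets of any subspace $W$ onto the cosets of the subspace $\Psi(W)\subset Y$. Because $\theta$ is a bijection it commutes with intersection, and because each coset of a larger subspace is a union of cosets of a smaller one, both $\Psi$ and the analogous map built from $\theta^{-1}$ are inclusion-preserving; thus $\Psi\colon \mathrm{Sub}(X)\to\mathrm{Sub}(Y)$ is an isomorphism of subspace lattices. In particular $\Psi$ sends atoms to atoms, so the image of every one-dimensional subspace is one-dimensional, and therefore $\theta$ maps every affine line onto an affine line. Now $\theta$ is a line-preserving bijection between affine spaces of dimension $\ge 2$, so the fundamental theorem of affine geometry yields a field isomorphism $\psi\colon K\to L$, a $\psi$-semilinear isomorphism $\phi$ and $y_0=\theta(0)$ with $\theta=\phi+y_0$; the fundamental theorem of projective geometry applied to $\Psi$ provides the semilinear structure in dimension $\ge 3$, while the two-dimensional case is recovered from line-preservation directly.

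It remains to promote $\psi$ to an isomorphism of topological fields and to settle uniqueness. For the former I will restrict $\theta$ to a single line $Kv$ carrying the subspace topology of a Hausdorff vector topology for which $Kv$ is a topological direct summand with its field topology; under the parametrisations $t\mapsto tv$ and $s\mapsto s\phi(v)$ the restriction of $\theta$ is exactly $\psi$, and since it is a homeomorphism onto its image, $\psi$ is a homeomorphism. Uniqueness is immediate: $y_0=\theta(0)$ is forced, $\phi=\theta-y_0$ is then determined, and $\psi$ is recovered from $\phi(tv)=\psi(t)\phi(v)$. The main obstacle is the middle step, namely extracting genuine linear data from purely order-theoretic information about vector topologies; the crux is that point-closures of vector topologies are precisely subspace-cosets and that this coset structure survives both $\theta_*$ and $C\circ\theta_*$, after which the geometric rigidity theorems apply. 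The remaining delicate points are the degenerate indiscrete field and the small fields in the two-dimensional case (most notably $K=\mathbb{F}_2$, where affine lines carry no information and the complement map genuinely intervenes), which I expect to treat by direct inspection.
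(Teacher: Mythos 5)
Your proposal is correct and follows essentially the same route as the paper: reduce to Lemma \ref{Hartmanis}, then use the intersection of the open neighbourhoods of $0$ (your point-closure $\overline{\{0\}}^{\,T}$ is exactly the paper's $\mathfrak{S}_X(T)$, and your pullback of a Hausdorff quotient topology plays the role of $\mathfrak{T}_X$) to show the bijection carries cosets of each subspace to cosets of a subspace, and finish with the fundamental theorem of affine geometry plus the restriction-to-a-line argument for continuity of $\psi$. Your treatment of the finite case via invariance of the point-closure partition under $C$ is equivalent to the paper's observation that $C$ fixes every vector topology on a finite space, so the two proofs coincide in substance.
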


Theorem A is reminiscent of the following two classical fundamental theorems of geometries, that is, the fundamental theorem of affine geometry and that of projective geometry.

\begin{thm}[The fundamental theorem of affine geometry]\label{F.T.A.G.}
Let $K,L$ be fields, $X$ be a vector space over $K$ with $\dim_K X\geq2$, and $Y$ be a vector space over $L$. If a bijection $\phi:X\rightarrow Y$ sends every two parallel lines of $X$ to parallel lines of $Y$, then $\phi$ is a semiaffine map. Namely, $\phi$ is a composition of semilinear isomorphism and a translation. 
\end{thm}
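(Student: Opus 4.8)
The plan is to normalize $\phi$ so that it fixes the origin, and then reconstruct both the additive structure and the scalar action of $\phi$ purely from the incidence-and-parallelism data the hypothesis provides, showing that the normalized map is semilinear. First I would compose $\phi$ with the translation $y\mapsto y-\phi(0)$, so that we may assume $\phi(0)=0$; it then suffices to prove that this normalized $\phi$ is a $\psi$-semilinear isomorphism for some field isomorphism $\psi:K\to L$. Note that every affine line $\ell$ has a distinct parallel line (any nontrivial translate of it), so the hypothesis forces $\phi(\ell)$ to be a genuine line for every $\ell$, and parallel classes (directions) are preserved. Since $\phi$ is a bijection on points and lines have at least two points, distinct lines cannot share the same image (a short cardinality comparison applied to $\ell_1\cup\ell_2$), so $\phi$ induces a bijection on lines carrying each parallel class to a parallel class. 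Because two distinct lines through a common point are never parallel, linearly independent $u,v$ (spanning distinct lines $Ku,Kv$ through $0$) are sent to $\phi(u),\phi(v)$ spanning distinct lines through $0=\phi(0)$; hence $\phi(u),\phi(v)$ are linearly independent.

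Next I would prove additivity by the parallelogram construction. For linearly independent $u,v$, the vector $u+v$ is the unique intersection of the line through $u$ in direction $v$ with the line through $v$ in direction $u$. Applying $\phi$ and using preservation of direction together with $\phi(0)=0$, these two images are the lines $\phi(u)+L\phi(v)$ and $\phi(v)+L\phi(u)$, whose unique intersection is $\phi(u)+\phi(v)$; thus $\phi(u+v)=\phi(u)+\phi(v)$ whenever $u,v$ are independent. To pass to collinear pairs, I would first fix, using $\dim_K X\geq 2$, a vector $w$ independent of $u$ and expand $\phi(w)=\phi\bigl((-u)+(u+w)\bigr)=\phi(-u)+\phi(u)+\phi(w)$ to obtain $\phi(-u)=-\phi(u)$. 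For general collinear $u=\alpha v_0$, $v=\beta v_0$ with $\alpha+\beta\neq 0$, I would choose $w\notin Kv_0$, so that $(u+w,\,v-w)$, $(u,w)$ and $(v,w)$ are all independent pairs, and expand $\phi(u+v)=\phi\bigl((u+w)+(v-w)\bigr)$ by the independent-case identity; the remaining case $\alpha+\beta=0$ is $v=-u$, already handled. This establishes $\phi(u+v)=\phi(u)+\phi(v)$ in full generality.

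Finally, fixing a nonzero $v$, the line $Kv$ is carried onto the line $L\phi(v)$ through $0$, so $\phi(tv)=\psi_v(t)\,\phi(v)$ defines a bijection $\psi_v:K\to L$ with $\psi_v(1)=1$, and additivity of $\phi$ gives additivity of $\psi_v$. Comparing coefficients in $\phi\bigl(t(v+w)\bigr)=\phi(tv)+\phi(tw)$ for independent $v,w$ — and bridging any two collinear nonzero vectors through a third independent one — shows $\psi_v$ is independent of $v$; calling the common map $\psi$, the identity $(st)v=s\cdot(tv)$ together with the definition applied twice yields $\psi(st)=\psi(s)\psi(t)$. Hence $\psi:K\to L$ is a field isomorphism and $\phi$ is $\psi$-semilinear, completing the proof after undoing the translation.

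I expect the main obstacle to be the genuinely degenerate, collinear configurations: guaranteeing that the auxiliary vectors needed for additivity and for the independence-of-$v$ step actually exist and produce non-degenerate parallelograms, and, relatedly, the well-known small-field anomaly — most acutely $|K|=2$, where every two-point set is a line and the parallelism hypothesis conveys almost no information. Navigating these degeneracies, rather than the formal verification of the field and linearity axioms, is where the real work lies.
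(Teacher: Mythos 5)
Your proof is correct, but note that the paper does not prove this statement at all: Theorem~\ref{F.T.A.G.} (restated as Theorem~\ref{lemmaFTAG}) is quoted as a classical result, with the proof deferred to Sancho de Salas and to Scherk. So there is no in-paper argument to compare against; what you have written is a self-contained proof along the standard lines of those references: normalize so that $\phi(0)=0$, observe that images of lines are lines and that direction classes are preserved, recover addition from the parallelogram configuration for independent pairs, reduce collinear pairs to independent ones via an auxiliary vector (which exists because $\dim_K X\geq 2$), and then read off the field isomorphism $\psi$ from the induced bijection $Kv\to L\phi(v)$, checking it is independent of $v$ by comparing coefficients in $\phi(t(v+w))=\phi(tv)+\phi(tw)$. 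All the steps check out, including the existence of the auxiliary vectors you worry about at the end. Two small remarks on the write-up: first, the injectivity of $\phi$ on lines follows not from a cardinality count (which degenerates when $K$ is infinite, since $|\ell_1\cup\ell_2|=|\ell_1|$) but simply from injectivity of $\phi$ as a map of sets, which gives $\ell_1=\phi^{-1}(\phi(\ell_1))=\phi^{-1}(\phi(\ell_2))=\ell_2$. Second, the feared $|K|=2$ anomaly does not arise for the \emph{parallelism} version of the theorem (as opposed to the collinearity version, which genuinely fails over $\mathbb{F}_2$): over $\mathbb{F}_2$ two lines $\{a,b\}$ and $\{c,d\}$ are parallel exactly when $a+b=c+d$, so the hypothesis applied to $\{a,b\}$ and $\{0,a+b\}$ already yields additivity of the normalized map, and your parallelogram argument goes through verbatim. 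So the degeneracies you flag as the ``real work'' are in fact already disposed of by the argument as written.
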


\begin{thm}[The fundamental theorem of projective geometry]\label{F.T.P.G.}
Let $K,L$ be fields, $X$ be a vector space over $K$ with $\dim_K X\geq 3$, and $Y$ be a vector space over $L$.
Let $\Phi$ be an isomorphism between lattices of all vector subspaces of $X$ and $Y$. Then, there exists a field isomorphism $\psi:K\rightarrow L$ and $\psi$-semilinear isomorphism $\phi:X\rightarrow Y$ such that $\Phi$ coincides with the induced map $\phi_*: S \mapsto \phi(S)$ for all subspaces $S$ of $X$. 
\end{thm}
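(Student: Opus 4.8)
The plan is to reduce the lattice-theoretic hypothesis to a purely incidence-geometric one, reconstruct the coefficient field from the incidence data, and then rebuild a semilinear map coordinate by coordinate. First I would observe that a lattice isomorphism $\Phi$ preserves order, meets, and joins, hence the height function, so it carries atoms (the $1$-dimensional subspaces, i.e.\ projective points) bijectively to atoms and height-$2$ elements (projective lines) to lines. Since a point $p$ lies on a line $\ell$ precisely when $p\leq\ell$, and this relation is preserved in both directions, $\Phi$ restricts to a collineation $f\colon P(X)\to P(Y)$: a bijection on points under which three points are collinear if and only if their images are. Conversely, because every subspace is the join of the points it contains and $\Phi$ respects joins, $\Phi$ is completely determined by $f$; so it suffices to realize $f$ by a $\psi$-semilinear isomorphism and then lift back by join-preservation.

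The heart of the argument is the coordinatization step. Because $\dim_K X\geq 3$, every projective plane inside $P(X)$ is Desarguesian, which is exactly the condition guaranteeing that the geometry can be coordinatized by a field rather than by a weaker ternary ring. Concretely, I would fix a projective frame---independent points $e_0,\dots,e_n$ together with a unit point represented by $e_0+\cdots+e_n$---and verify that its image under $f$ is again a frame, using that $\Phi$ preserves independence and spanning. On a fixed line, after marking three points as $0,1,\infty$, von Staudt's algebra of throws defines addition and multiplication of the remaining points purely by incidence constructions, that is, by projections through auxiliary points lying off the line. Since $f$ preserves all incidences, it transports each such construction in $X$ to the corresponding one in $Y$, so the induced bijection between the coordinates respects both operations; this yields a field isomorphism $\psi\colon K\to L$.

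Having $\psi$ in hand, I would choose vectors representing the frame points and define $\phi\colon X\to Y$ coordinatewise through $\psi$, then check directly that $\phi$ is $\psi$-semilinear and induces $f$ on points. The ambiguity in choosing representatives is precisely multiplication by a nonzero scalar, which pins $\psi$ down uniquely and determines $\phi$ up to an overall scaling; feeding this back through the join-preservation observation then gives $\Phi=\phi_*$ on the whole lattice.

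The step I expect to be the main obstacle is the coordinatization itself: one must verify that the geometrically defined addition and multiplication are well defined (independent of the auxiliary choices), that they satisfy the field axioms, and that the collineation genuinely intertwines them on both sides. A secondary subtlety is the behavior over very small fields such as $\mathbb{F}_2$, where the harmonic-conjugate construction degenerates and the naive algebra of throws must be modified; I would treat these exceptional cases separately, relying once more on $\dim_K X\geq 3$ to furnish enough points in general position to carry out the constructions.
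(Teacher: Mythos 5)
Your outline is correct and follows essentially the same route as the proof the paper relies on: the paper does not prove this theorem itself but cites Baer's \emph{Linear Algebra and Projective Geometry} (pp.~44--50), whose argument is precisely the reduction of the lattice isomorphism to a collineation of projective points followed by a coordinatization that recovers the field isomorphism $\psi$ and the semilinear map $\phi$. The subtleties you flag (well-definedness of the incidence-defined operations, degeneracies over very small fields) are exactly the ones handled in that classical treatment, so there is no gap in the approach.
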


We use the fundamental theorem of affine geometry in our proof of Theorem A.  Similarly, to prove Theorem C stated below, we apply the fundamental theorem of projective geometry. For these purposes, we connect a lattice of vector topologies and a lattice of subspaces by a Galois connection $\mathfrak{S},\mathfrak{T}$ (see Definition \ref{mathfrakS} and Definition \ref{mathfrakT}).\par

Let us discuss two easy consequence of Theorem A. One yields that the distribution of vector topologies $\tau_K(X)$ in the lattice of topologies $\Sigma(X)$ determines the structures of the coefficient field $K$ and linear structure of $X$ (Corollary \ref{cor main thm}).\\
The other consequence is on the group of lattice automorphisms on $\Sigma(X)$ which preserves $\tau_K(X)$, denote by ${\rm Aut}(\Sigma(X),\tau_K(X))$. This group is the subgroup of ${\rm Aut}(\Sigma(X))$ defined by
\begin{align*}
{\rm Aut}(\Sigma(X),\tau_K(X))=\{\Phi \in\rm{Aut}(\Sigma(X)) \mid \Phi(\tau_K(X))=\tau_K(X)\}.
\end{align*}
We also denote by $\Gamma L_h(X)$, the group of all semilinear transformations on $X$ whose adjiont field automorphisms of $K$ are also homeomorphism of $K$. 

\begin{thmB}[Theorem \ref{decompose}]
Let $X$ be a vector space over $K$ with $\dim_KX\geq2$. For the group ${\rm Aut}(\Sigma(X),\tau_K(X))$, we have the following:
\begin{itemize}
\item if $|X|$ is finite, then ${\rm Aut}(\Sigma(X),\tau_K(X))$ is isomorphic to $(X\rtimes \Gamma L_h(X))\times\mathbb{Z}/2\mathbb{Z}$, and \item if $|X|$ is infinite, then ${\rm Aut}(\Sigma(X),\tau_K(X))$ is isomorphic to $X\rtimes \Gamma L_h(X)$,
\end{itemize}
where the product $(x_1,\phi_1)\cdot(x_2,\phi_2)$ is defined by $(x_1+\phi_1(x_2),\phi_1\circ\phi_2)\in X\rtimes\Gamma L_h(X)$.
\end{thmB}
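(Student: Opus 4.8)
The plan is to read Theorem~B off Theorem~A (Theorem~\ref{main thm}) by organizing the affine semilinear maps into the group $X \rtimes \Gamma L_h(X)$ and treating the complement map $C$ separately. To a pair $(x_0,\phi)\in X\rtimes\Gamma L_h(X)$ I associate the affine bijection $\phi+x_0\colon v\mapsto \phi(v)+x_0$ and the lattice automorphism $(\phi+x_0)_*$. A direct computation gives $(\theta_1\circ\theta_2)_*=(\theta_1)_*\circ(\theta_2)_*$ for bijections $\theta_i$, while composing affine maps gives $(\phi_1+x_1)\circ(\phi_2+x_2)=(\phi_1\phi_2)+(x_1+\phi_1(x_2))$; matching this with the stated product of $X\rtimes\Gamma L_h(X)$ shows that
\[
\alpha\colon X\rtimes\Gamma L_h(X)\longrightarrow \mathrm{Aut}(\Sigma(X)),\qquad \alpha(x_0,\phi)=(\phi+x_0)_*,
\]
is a group homomorphism.

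First I would check that $\alpha$ lands in $\mathrm{Aut}(\Sigma(X),\tau_K(X))$ and is injective. Well-definedness is the elementary converse to Theorem~A: transporting a vector topology $T$ along the affine $\psi$-semilinear homeomorphism $\phi+x_0$ again gives a vector topology, because conjugating by $\phi+x_0$ turns addition into $(a,b)\mapsto a+b+\phi^{-1}(x_0)$ and scalar multiplication into $(\lambda,a)\mapsto \psi^{-1}(\lambda)\,a+\psi^{-1}(\lambda-1)\,\phi^{-1}(x_0)$, both continuous on $(X,T)$ since $\psi^{-1}$ is a homeomorphism of $K$ — here semilinearity of $\phi$ is exactly what rewrites $\phi^{-1}$ of a scalar multiple of $x_0$ as a scalar multiple of the fixed vector $\phi^{-1}(x_0)$. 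Injectivity of $\alpha$ follows from the uniqueness of the triple in Theorem~A: if $(\phi+x_0)_*=\mathrm{id}=(\mathrm{id}_X+0)_*$ then $(\phi,x_0)=(\mathrm{id}_X,0)$. Writing $H=\mathrm{im}\,\alpha$, Theorem~A says every element of $\mathrm{Aut}(\Sigma(X),\tau_K(X))$ has the form $(\phi+x_0)_*$ or $C\circ(\phi+x_0)_*$, so $H$ is precisely the set of non-complement elements. In the infinite case Lemma~A (Lemma~\ref{Hartmanis}) forbids the complement case, so $\mathrm{Aut}(\Sigma(X),\tau_K(X))=H\cong X\rtimes\Gamma L_h(X)$, giving the second assertion.

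For finite $X$ the remaining work is to insert the factor $\mathbb{Z}/2\mathbb{Z}$ generated by $C$. I would first record three formal facts: $C^2=\mathrm{id}$; $C$ commutes with every induced map, since $C\circ\theta_*$ and $\theta_*\circ C$ both send $T$ to $\{X\setminus\theta(U)\mid U\in T\}$; and $C$ is not an induced map (by Hartmanis's description of $\mathrm{Aut}(\Sigma(X))$ as a direct product for $|X|>2$, which applies because $\dim_K X\geq2$ forces $|X|\geq4$), so $\langle C\rangle\cap H=\{\mathrm{id}\}$. The one genuinely topological point is that $C$ preserves $\tau_K(X)$. This I would settle through the structure of vector topologies on a finite space: continuity of addition makes every vector topology translation-invariant, hence, the space being Alexandrov, it is determined by the minimal open neighborhood $N$ of $0$, and $N$ is a submonoid of the finite group $(X,+)$, therefore a subgroup; the open sets are then exactly the unions of cosets of $N$, a family visibly invariant under complementation, so in fact $C(T)=T$ for every $T\in\tau_K(X)$. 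Consequently $C\in G:=\mathrm{Aut}(\Sigma(X),\tau_K(X))$, and since $C$ is central with $C^2=\mathrm{id}$ and $\langle C\rangle\cap H=\{\mathrm{id}\}$, Theorem~A yields $G=H\cup CH=H\times\langle C\rangle\cong (X\rtimes\Gamma L_h(X))\times\mathbb{Z}/2\mathbb{Z}$.

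The main obstacle I anticipate is this last topological step — verifying $C(\tau_K(X))=\tau_K(X)$ in the finite case — because it does not follow formally from Theorem~A, which only classifies automorphisms already assumed to preserve vector topologies, and instead requires the explicit description of finite vector topologies as the coset topologies of clopen subgroups. Everything else is bookkeeping with the homomorphism $\alpha$ and the commutation relations of $C$.
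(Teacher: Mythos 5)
Your proposal is correct and follows essentially the same route as the paper: the homomorphism $(x,\phi)\mapsto(\phi+x)_*$ into ${\rm Aut}(\Sigma(X),\tau_K(X))$, the fact that $C$ acts as the identity on $\tau_K(X)$ when $X$ is finite, and Theorem A for surjectivity together with the uniqueness of the triple for injectivity. The only place you diverge is the verification that $C(T)=T$ for finite $X$: the paper imports this from the proof of Theorem \ref{main thm}, where it is deduced from the isomorphism $\tau_L(Y)\cong\sigma_L(Y)$ of \cite[Lemma 2.9]{T.Aoy}, whereas you give a short self-contained argument via the Alexandrov/coset description of finite vector topologies (the minimal open neighborhood of $0$ is a subgroup and the open sets are exactly the unions of its cosets); both are valid, and yours has the minor advantage of avoiding the external reference.
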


Next, because the restriction of $\Phi$ in Theorem A to the lattice of vector topologies $\tau_K(X)$ is a lattice isomorphism between $\tau_K(X)$ and $\tau_L(Y)$, it is natural to consider whether or not the same result holds when the assumption is only the existence of a lattice isomorphism between vector topologies. Although the same result does not hold in general (Example \ref{only compatible}), if the lattice isomorphism preserves Hausdorff vector topologies, we can prove a similar result by using the fundamental theorem of projective geometry. We denote by $\tau_K^H(X)$ and $\tau_L^H(Y)$, the set of all Hausdorff vector topologies on $X$ and $Y$, respectively.

\begin{thmC}[Theorem \ref{comatible topologies with Hausdroff}]
Let $K,L$ be topological fields, and let $X$ and $Y$ be vector spaces over $K$ and $L$, respectively, with $\dim_K(X)\geq3$. If there exists a lattice isomorphism $\Phi:\tau_K(X)\rightarrow\tau_L(Y)$ such that $\Phi(\tau_K^H(X))=\tau_L^H(Y)$, then fields $K$ and $L$ are algebraically isomorphic, and the vector spaces $X$ and $Y$ have the same dimension.
\end{thmC}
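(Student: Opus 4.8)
The plan is to reduce Theorem C to the fundamental theorem of projective geometry (Theorem \ref{F.T.P.G.}) by transporting the hypothesised lattice isomorphism $\Phi$ of vector topologies to a lattice isomorphism of subspace lattices, via the Galois connection $\mathfrak{S},\mathfrak{T}$ of Definitions \ref{mathfrakS} and \ref{mathfrakT}. Writing $\mathrm{Sub}(X)$ for the lattice of linear subspaces of $X$, I read $\mathfrak{S}\colon\tau_K(X)\to\mathrm{Sub}(X)$ as sending a vector topology $T$ to the $T$-closure $\overline{\{0\}}$ of the origin (a subspace, by translation invariance), and $\mathfrak{T}\colon\mathrm{Sub}(X)\to\tau_K(X)$ as sending a subspace $S$ to the finest vector topology each of whose neighbourhoods of $0$ contains $S$. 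The adjunction $S\subseteq\mathfrak{S}(T)\iff T\subseteq\mathfrak{T}(S)$ is an antitone Galois connection, and since (for Hausdorff $K$) the finest vector topology on the quotient $X/S$ is again Hausdorff, one gets $\mathfrak{S}\circ\mathfrak{T}=\mathrm{id}_{\mathrm{Sub}(X)}$. Hence $\mathfrak{T}$ is an order-reversing bijection of $\mathrm{Sub}(X)$ onto the set $\mathcal{C}(X)$ of Galois-closed topologies, and likewise for $Y$. First I would dispose of the degenerate case in which $K$ (equivalently $L$) carries the indiscrete topology: there a nonzero $X$ admits no Hausdorff vector topology, so $\tau_K^H(X)=\varnothing$, the hypothesis forces $\tau_L^H(Y)=\varnothing$, and the claim is handled directly; thus I may assume $K$ and $L$ are Hausdorff.

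The core of the argument is to show that $\Phi$ carries $\mathcal{C}(X)$ onto $\mathcal{C}(Y)$. Granting this, the restriction of $\Phi$ is an order isomorphism $\mathcal{C}(X)\to\mathcal{C}(Y)$, and composing it with the two order-reversing bijections $\mathfrak{T}$ produces a map
\[
\Psi:=\mathfrak{T}_Y^{-1}\circ\Phi|_{\mathcal{C}(X)}\circ\mathfrak{T}_X\colon\mathrm{Sub}(X)\longrightarrow\mathrm{Sub}(Y),
\]
which, being a composite of two order-reversing bijections with one order isomorphism, is an honest lattice isomorphism. Since $\dim_K X\geq 3$, Theorem \ref{F.T.P.G.} applies to $\Psi$ and yields a field isomorphism $\psi\colon K\to L$ together with a $\psi$-semilinear isomorphism $\phi\colon X\to Y$ inducing $\Psi$. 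This gives at once that $K$ and $L$ are algebraically isomorphic (no claim on topologies being made, consistent with the fact that beyond Hausdorffness the Galois data are blind to the field topology), and, since a semilinear isomorphism carries a $K$-basis of $X$ to an $L$-basis of $Y$, it gives $\dim_K X=\dim_L Y$, as required.

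The main obstacle is precisely the invariance $\Phi(\mathcal{C}(X))=\mathcal{C}(Y)$, and this is where the hypothesis $\Phi(\tau_K^H(X))=\tau_L^H(Y)$ must be spent. The difficulty is that $\mathcal{C}(X)$ is defined through the topological operator $\overline{\{0\}}$, which is a priori invisible to an abstract lattice isomorphism; indeed Example \ref{only compatible} shows that without the Hausdorff hypothesis $\Phi$ need not respect this structure. My intended route is to describe the closure operator $c:=\mathfrak{T}\circ\mathfrak{S}$ purely in terms of the lattice order together with the distinguished filter $\tau_K^H(X)$ of Hausdorff topologies (an up-set, since any refinement of a Hausdorff vector topology is again Hausdorff), so that $c$ is automatically intertwined by $\Phi$ and its fixed-point set $\mathcal{C}(X)$ is preserved. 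Concretely I would exploit the lattice isomorphism between the principal ideal $\{T:T\subseteq\mathfrak{T}(S)\}$ and $\tau_K(X/S)$ afforded by pullback along $X\to X/S$, under which $\mathfrak{T}(S)$ is the top element and the fibre $\mathfrak{S}^{-1}(S)$ corresponds to the Hausdorff topologies of the quotient; recovering these relative Hausdorff filters from the single absolute filter $\tau_K^H(X)$ is the delicate point, and I expect it to require an induction on the closure-of-zero subspaces ordered by inclusion together with a careful analysis of joins and meets against Hausdorff topologies. Once this lattice-plus-Hausdorff description of $c$ is in hand, the remaining steps are formal.
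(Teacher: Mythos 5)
Your framework---the antitone Galois connection $(\mathfrak{S},\mathfrak{T})$, the reduction to the fundamental theorem of projective geometry---is the same as the paper's, and your reading of $\mathfrak{S}(T)$ as the closure of the origin and of the adjunction identities is correct. But the proof has a genuine gap at exactly the point you yourself flag as ``the delicate point'': you never establish $\Phi(\mathcal{C}(X))=\mathcal{C}(Y)$, i.e.\ that $\Phi$ carries Galois-closed topologies to Galois-closed topologies. The sketch you give (recover the closure operator $c=\mathfrak{T}\circ\mathfrak{S}$ from the order together with the Hausdorff filter, via relative Hausdorff filters on the principal ideals below each $\mathfrak{T}(S)$) is only a plan: no argument is supplied for recovering those relative filters from the single absolute filter $\tau_K^H(X)$, and it is not clear this can be done---for a fixed subspace $S$ there are in general many non-closed vector topologies $T$ with $\mathfrak{S}(T)=S$, and nothing in the hypotheses visibly forces $\Phi$ to send the maximal one, $\mathfrak{T}_X(S)$, to a topology that is again maximal in its fibre. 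Since you declare everything after this step ``formal,'' the proof as written does not go through.

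The paper shows that this strong invariance is not actually needed. It works directly with the composites $F=\mathfrak{S}_Y\circ\Phi\circ\mathfrak{T}_X$ and $G=\mathfrak{S}_X\circ\Phi^{-1}\circ\mathfrak{T}_Y$: these preserve the inclusion order, and the Galois inequalities of Lemma \ref{composition} give $G\circ F(S)\subseteq S$ and $F\circ G(S')\subseteq S'$ without any knowledge of whether $\Phi(\mathfrak{T}_X(S))$ is closed. The Hausdorff hypothesis is spent only at the bottom: by part (3) of Lemma \ref{composition}, a subspace is $0$-dimensional exactly when its associated topology is Hausdorff, so $F$ and $G$ preserve $0$-dimensionality; an induction on dimension then shows that $F$ and $G$ restrict to mutually inverse, dimension-preserving bijections on finite-dimensional subspaces. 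Note that this produces a lattice isomorphism only between the sublattices of finite-dimensional subspaces, which is why the paper must invoke the finite-dimensional variant of the fundamental theorem of projective geometry (Remark \ref{remarkFTPG}) rather than Theorem \ref{lemmaFTPG} itself---a second point your proposal glosses over, since you apply the theorem to the full subspace lattices, which you would only be entitled to do if the unproved invariance held. (Your preliminary reduction to Hausdorff $K$ is harmless but unnecessary: topological fields are Hausdorff by Definition \ref{topological field}.) To repair your write-up, replace the ``lattice-plus-Hausdorff description of $c$'' step by this dimension induction on $F$ and $G$.
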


We give an example (Example \ref{not necessarily continuous}) to show that the field isomorphism between $K$ and $L$ in Theorem C is not necessarily continuous.
 
\subsection{Structure of this paper}
In Section 2, as a preparation, we recall some definitions related to lattices and vector topologies. Next, we see the fundamental theorems of geometries and construct a Galois connection between a lattice of vector topologies and a lattice of subspaces.  
In Section 3, we give a proof of slightly modified version of the result of J. Hartmanis to prove Theorem A. Then we prove Theorem A and obtain two applications: one is stating that the distribution of vector topologies determines its coefficient field and dimension. The other is on the group structure of lattice automorphisms which preserve vector topologies (Theorem B). In Section 4, we end the paper by proving Theorem C and giving an example. 
%-----------------------------------------------------------
\section{Preliminaries}
\subsection{Preliminaries on lattice}
A partially ordered set (abbreviated to poset) $(\mathbb{P},\leq)$ is a {\it lattice} if every pair of two elements of $\mathbb{P}$ has the supremum (the least upper bound) and infimum (the greatest lower bound) with respect to the order $\leq$. By the antisymmetric law, the supremum and infimum are unique for each pair. We denote the supremum and infimum of $x,y\in\mathbb{P}$ by $x\vee y$ and $x\wedge y$, and call them the {\it join} and {\it meet} of $x,y$, respectively.\\
A lattice $(\mathbb{P},\leq)$ is said to be {\it complete} if any subsets $S$ of $\mathbb{P}$ have the supremum and infimum. We denote by $\bigvee S$ and $\bigwedge S$, the supremum and infimum of $S$, respectively. In particular, since $S=\emptyset\subset\mathbb{P}$ has supremum and infimum for a complete lattice $\mathbb{P}$, it has the minimum and maximum element.\\
A map $\Phi$ between two lattices $\mathbb{P}$ and $\mathbb{Q}$ is a {\it lattice homomorphism} if $\Phi$ preserves their joins and meets. By definition, the order is characterized by the join or meet, and thus every lattice homomorphism is an order preserving map, but the converse is not true in general. 
When a lattice homomorphism $\Phi:\mathbb{P}\rightarrow \mathbb{Q}$ has the inverse lattice homomorphism $\Phi^{-1}:\mathbb{Q}\rightarrow\mathbb{P}$, the map $\Phi$ is called {\it lattice isomorphism}. It is easy to see that a map is a lattice isomorphism if and only if it is an order preserving bijection, and that a lattice isomorphism between complete lattices $\mathbb{P}$ and $\mathbb{Q}$ preserves supremum and infimum of any subsets of $\mathbb{P}$. Two lattices are {\it isomorphic} if there exists a lattice isomorphism between them. A subset $S$ of a lattice $\mathbb{P}$ is a {\it sublattice} if the inclusion map $S\hookrightarrow \mathbb{P}$ is a lattice homomorphism.
\par
\begin{exa}
For a vector space, the poset of all vector subspaces with the inclusion order is a lattice. Here, the join $S_1\vee S_2$ is $S_1+S_2$ and the meet $S_1\wedge S_2$ is $S_1\cap S_2$ for subspaces $S_1,S_2$.
\end{exa}

\begin{dfn}
Let $X$ be a vector space over a field $K$. We denote by $\sigma_K(X)$, the lattice of $K$-vector subspaces with the inclusion order $\subset$.
\end{dfn}

%-------------------------
\subsection{Lattice of topologies and vector topologies}
In this subsection, we recall the lattice of topologies.\\
Let $X$ be a non-empty set. We denote by $\Sigma(X)$, the poset consisting of all topologies on $X$ endowed with the order $\subset$. In this paper, we treat a topology $T\in\Sigma(X)$ as a family of open subsets of $X$. For a family of topologies $\{T_\lambda\}_{\lambda\in\Lambda}$ on $X$, the intersection $\bigcap_{\lambda\in\Lambda}T_{\lambda}$ and topology generated by $\bigcup_{\lambda\in\Lambda}T_{\lambda}$ are the infimum and supremum of the family $\{T_\lambda\}_{\lambda\in\Lambda}$, respectively, where we consider the intersection $\bigcap_{\lambda\in\Lambda}T_{\lambda}$ as the discrete topology if $\Lambda$ is empty. Thus, the poset $(\Sigma(X),\subset)$ is a complete lattice.\\
For a map $f:X\rightarrow Y$ between two sets, we can define induced maps $f_*:\Sigma(X)\rightarrow\Sigma(Y)$ and $f^*:\Sigma(Y)\rightarrow\Sigma(X)$ by
\begin{align*}
f_*(T)&=\{V\subset Y\mid f^{-1}(V)\in T\},~T\in\Sigma(X),\\
f^*(T')&=\{f^{-1}(V)\subset X\mid V\in T'\},~T'\in\Sigma(Y). 
\end{align*}
\begin{dfn}\label{topological field}
Let $K$ be a field endowed with a Hausdorff topology. The field $K$ is a {\it topological field} if the three operators of $K$
\begin{itemize}
\item{additive operator}: $K\times K \ni (x,y) \mapsto x+y \in K$,
\item{multiple operator}: $K\times K \ni (x,y) \mapsto xy \in K$,
\item{inverse operator}: $K\setminus\{0\} \ni x \mapsto x^{-1} \in K\setminus\{0\}$
\end{itemize}
are continuous, where $K\times K$ and $K\setminus\{0\}$ are endowed with the product topology and relative topology of $K$, respectively. 
\end{dfn}

Here, if we drop the assumption of Hausdorff topology on a topological field $K$, the topological field $K$ only has the trivial indiscrete topology $\{\emptyset,K\}$.

\begin{dfn}\label{topological vector space}
A vector space $X$ endowed with a (possibly non-Hausdorff) topology over a topological field $K$ is called {\it topological vector space} if the two operators 
\begin{itemize}
\item{additive operator}: $X\times X \ni (x,y) \mapsto x+y \in X$,
\item{scalar multiple operator}: $K\times X \ni (\alpha,x) \mapsto \alpha\cdot x \in X$
\end{itemize}
are continuous, where $X\times X$ and $K\times X$ are endowed with the product topologies. We call the topology on the topological vector space $X$ {\it vector topology}. \\
We denote by $\tau_K(X)$, the poset of all vector topologies on $X$ with inclusion $\subset$, and denote by $\tau_K^H(X)$, the subset of $\tau_K(X)$ consisting of Hausdorff vector topologies.
\end{dfn}

It is known that the set $\tau_K(X)$ is a complete lattice. Thus $\tau_K(X)$ has the maximum element (the strongest topology), which we denote by $T^{\max}_X$. We show in the next subsection, that $T^{\max}_X\in \tau_K^H(X)$. 
Although $\tau_K(X)$ is a subposet of $\Sigma(X)$, the lattice $\tau_K(X)$ is not a sublattice of $\Sigma(X)$ in general. Namely, the meet of $\tau_K(X)$ does not coincide with that of $\Sigma(X)$. 

%-------------------------
\subsection{Semilinear isomorphism}
We recall semilinear isomorphism in this subsection.
\begin{dfn}\label{semilinear isomorphism}
Let $X$ and $Y$ be vector spaces over fields $K$ and $L$, respectively. For a field isomorphism $\psi:K\rightarrow L$, a bijection $\phi:X\rightarrow Y$ is a $\psi$-{\it semilinear isomorphism} if $\phi$ satisfies
\begin{align*}
\phi(x_1+x_2)&=\phi(x_1)+\phi(x_2)~\text{for}~x_1,x_2\in X, \text{and}\\
\phi(\alpha\cdot x)&=\psi(\alpha)\cdot\phi(x)~\text{for}~\alpha\in K,x\in X.
\end{align*}
We simply call $\phi$ a {\it semilinear isomorphism} if $\phi$ is a $\psi$-semilinear isomorphism for some field isomorphism $\psi$.
\end{dfn}
If $\psi:K\rightarrow L$ is a field isomorphism and also a homeomorphism, a $\psi$-semilinear map $\phi:X\rightarrow Y$ induces maps between $\tau_K(X)$ and $\tau_L(Y)$ by restricting $\phi_*$ and $\phi^*$. 
In the case when $K=L$ and $\phi:X\rightarrow Y$ is a linear map, the same holds even if we drop the injectivity of $\phi$ for $\phi_*$, and the injectivity and surjectivity of $\phi$ for $\phi^*$. 
By using this fact, we see that $T^{\max}_X$ is a Hausdorff topology, that is, $T^{\max}_X\in\tau_K^H(X)$. Let $x_1,x_2$ be two different elements of $X$. We denote by $\langle x_1-x_2\rangle$, a 1-dimensional subspace generated by $x_1-x_2$, and decompose $X$ into a direct sum $X=X'\oplus \langle x_1-x_2\rangle$ for a subspace $X'$. We define a map $\phi:X\rightarrow K$ by
\begin{align*}
\phi: X'\oplus \langle x_1-x_2\rangle \ni x'+\alpha(x_1-x_2)\mapsto \alpha \in K.
\end{align*} 
It is clear that $K$ is itself a Hausdorff topological vector space, and thus a vector topology on $X$ that is an image of the topology of $K$ by $\phi^*$ has disjoint neighborhoods $U$ and $V$ of $0$ and $x_1-x_2$, respectively. Since $T^{\max}_X$ is the maximum topology, $U$ and $V$ belong to $T^{\max}_X$, and $U+x_2,~V+x_2$ are disjoint neighborhoods of $x_2$ and $x_1$ with respect to $T^{\max}_X$. Thus, we obtain:
\begin{prop}
$T^{\max}_X$ is a Hausdorff vector topology.
\end{prop}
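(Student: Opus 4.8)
The plan is to use the single defining feature of $T^{\max}_X$: as the maximum element of $\tau_K(X)$, it contains every vector topology on $X$ as a subfamily of open sets. Consequently, to prove it is Hausdorff it suffices to separate any two distinct points by open sets that already come from \emph{some} vector topology, since those sets remain open in the larger topology $T^{\max}_X$. Moreover, translations are homeomorphisms for any member of $\tau_K(X)$, and in particular for $T^{\max}_X$, so I would first reduce the Hausdorff condition to separating the origin $0$ from an arbitrary nonzero vector $z \in X$.

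To produce a suitable vector topology adapted to $z$, I would build it by pulling back the topology of $K$ along a linear functional. Writing $z = x_1 - x_2$ with $x_1 \neq x_2$ and fixing a direct-sum decomposition $X = X' \oplus \langle z \rangle$, define $\phi : X \to K$ by $\phi(x' + \alpha z) = \alpha$, so that $\phi(0) = 0$ and $\phi(z) = 1$. Viewing $K$ as a one-dimensional Hausdorff topological vector space over itself, $\phi$ is a $K$-linear map, and by the remark immediately preceding the statement the pullback $\phi^*$ of the topology of $K$ is again a vector topology on $X$, since $\phi^*$ preserves vector topologies for linear maps even when injectivity and surjectivity are dropped.

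Since $K$ is Hausdorff, I would choose disjoint open neighborhoods $U$ of $0$ and $V$ of $1$ in $K$; then $\phi^{-1}(U)$ and $\phi^{-1}(V)$ are disjoint open sets of the pullback topology separating $0$ and $z$. Because this pullback topology belongs to $\tau_K(X)$ and $T^{\max}_X$ dominates it, both preimages lie in $T^{\max}_X$, and translating by $x_2$ gives disjoint $T^{\max}_X$-open neighborhoods $\phi^{-1}(U) + x_2$ and $\phi^{-1}(V) + x_2$ of $x_2$ and $x_1$. As $x_1, x_2$ were arbitrary, $T^{\max}_X$ is Hausdorff; being a vector topology by construction, it lies in $\tau_K^H(X)$. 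The only genuinely load-bearing point is that $\phi^*$ of a vector topology is a vector topology, which is exactly the linearity fact supplied above; everything else — well-definedness of the projection after fixing the decomposition, and the containment in the maximum topology — is formal, so I do not expect any serious obstacle once that fact is invoked.
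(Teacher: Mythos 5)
Your proposal is correct and follows essentially the same route as the paper: the same direct-sum decomposition $X = X' \oplus \langle x_1 - x_2\rangle$, the same projection $\phi$ onto the coefficient of $x_1-x_2$, the same pullback $\phi^*$ of the Hausdorff topology of $K$ to obtain a vector topology separating $0$ from $x_1-x_2$, and the same final translation by $x_2$. The only difference is presentational — you make the reduction to separating $0$ from a nonzero vector explicit before constructing $\phi$ — so there is nothing substantive to add.
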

\par
Lastly, we note that a semilinear isomorphism $\phi$ preserves the linearly independence. Thus $X$ and $Y$ have the same dimension if a semilinear isomorphism exists.

%-------------------------
\subsection{Fundamental theorems of two geometries}

The fundamental theorems of affine and projective geometries are stated and generalized in various way. In this subsection, we recall one of them needed later. \par

Recall that two subsets $A_1,A_2$ of a vector space $X$ are called {\it parallel lines} if there are two elements $x_1,x_2\in X$ and one-dimensional subspace $l\subset X$ such that $A_1=x_1+l$ and $A_2=x_2+l$ holds. Then the fundamental theorem of affine geometry is stated as follows:

\begin{thm}[The fundamental theorem of affine geometry]\label{lemmaFTAG}
Let $K$ and $L$ be fields and let $X$ and $Y$ be vector spaces over $K$ and $L$, respectively, with $\dim_KX\geq2$. Then, every bijection $\phi:X\rightarrow Y$ which maps parallel lines of $X$ to that of $Y$ is a semiaffine map. 
\end{thm}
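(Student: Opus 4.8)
The plan is to first normalize $\phi$ so that it fixes the origin, and then to extract additivity and semilinearity from the combinatorics of parallelism. Set $\phi_0 = \phi - \phi(0)$, the composition of $\phi$ with the translation $y \mapsto y - \phi(0)$. Since a translation carries a family of parallel lines to a family of parallel lines with the same direction, $\phi_0$ again maps parallel lines to parallel lines and satisfies $\phi_0(0) = 0$; it then suffices to prove that $\phi_0$ is a $\psi$-semilinear isomorphism for some field isomorphism $\psi : K \to L$, for then $\phi = \phi_0 + \phi(0)$ is semiaffine by definition. The first preliminary step is to record that $\phi_0$ sends each affine line to an affine line: because $\dim_K X \geq 2$, every line $\ell$ admits a distinct parallel translate $\ell'$, and applying the hypothesis to the pair $(\ell,\ell')$ shows that $\phi_0(\ell)$ is a line. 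A short bijectivity argument, together with the fact that $\dim_L Y \geq 2$ (which follows since $\phi_0$ is onto and line-preserving), shows that $\phi_0^{-1}$ is also line- and parallelism-preserving, so that parallelism is preserved in both directions; in particular $\phi_0$ carries linearly independent pairs to linearly independent pairs.

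The second step is additivity. For linearly independent $x,y$, the point $x+y$ is characterized purely by incidence and parallelism: it is the unique intersection of the line through $x$ in direction $\langle y\rangle$ with the line through $y$ in direction $\langle x\rangle$. Applying $\phi_0$ to this parallelogram configuration and using $\phi_0(0)=0$ together with two-sided preservation of parallelism yields $\phi_0(x+y)=\phi_0(x)+\phi_0(y)$. The collinear case, where $x$ and $y$ lie on a common line through the origin, is reduced to the independent case by writing $x+y=(x+z)+(y-z)$ for an auxiliary vector $z$ chosen off that line; the finitely many degenerate subcases (most notably $y=-x$) are treated directly, after which $\phi_0$ is fully additive.

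The third step produces the field isomorphism. For each nonzero $x\in X$ the map $\phi_0$ carries $\langle x\rangle$ bijectively onto $\langle \phi_0(x)\rangle$, so there is a bijection $\psi_x:K\to L$ with $\phi_0(\alpha x)=\psi_x(\alpha)\,\phi_0(x)$; additivity of $\phi_0$ gives $\psi_x(\alpha+\beta)=\psi_x(\alpha)+\psi_x(\beta)$ and $\phi_0(e)=\psi_x(1)\phi_0(e)$ gives $\psi_x(1)=1$. The crucial point is direction-independence: for independent $x,y$ the lines $x+\langle y-x\rangle$ and $\alpha x+\langle y-x\rangle$ are parallel, hence so are their images under $\phi_0$, and comparing directions (using that $\phi_0(x),\phi_0(y)$ remain independent) forces $\psi_x(\alpha)=\psi_y(\alpha)$; collinear directions are matched through a third one. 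Writing $\psi$ for this common map, multiplicativity follows from comparing $\phi_0(\alpha(\beta e))=\psi(\alpha)\phi_0(\beta e)=\psi(\alpha)\psi(\beta)\phi_0(e)$ with $\phi_0((\alpha\beta)e)=\psi(\alpha\beta)\phi_0(e)$. Thus $\psi$ is a field isomorphism and $\phi_0$ is $\psi$-semilinear, so $\phi=\phi_0+\phi(0)$ is semiaffine.

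The main obstacle I expect is not the algebra but the geometric bookkeeping that underlies the first two steps: verifying that $\phi_0^{-1}$ preserves parallelism (equivalently, that line-preservation is genuinely two-sided and $\dim_L Y\geq 2$), and handling the degenerate collinear configurations of the additivity argument, where the clean parallelogram picture breaks down and auxiliary vectors must be introduced with care to avoid circularity; in particular one must establish $\phi_0(-x)=-\phi_0(x)$ before invoking full additivity. Once parallelism is known to be preserved symmetrically and additivity is established in every case, the passage to the field isomorphism and to semilinearity is comparatively formal.
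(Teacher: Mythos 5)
The paper offers no proof of this theorem; it is quoted as a known result, with the proof delegated to the cited references of Sancho de Salas and Scherk. Your outline is precisely the standard argument given there — normalize so that $\phi_0(0)=0$, check that line- and parallelism-preservation is two-sided, obtain additivity from the parallelogram configuration (with the collinear and $y=-x$ cases handled via an auxiliary independent vector), and then show the scalar maps $\psi_x$ are direction-independent and multiplicative — and it is correct; the details you flag (two-sidedness, $\phi_0(-x)=-\phi_0(x)$, the degenerate collinear configurations) are exactly the routine ones and each is fillable by the arguments you indicate.
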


For a proof of Theorem \ref{lemmaFTAG}, see \cite[Fundamental Theorem of Affine Geometry, p. 12]{J.B.San} or \cite{P.Sch}. Although, the paper \cite{P.Sch} treats the case when $K=L$ and $X=Y$, the proof also works for Theorem \ref{lemmaFTAG}.

\begin{thm}[The fundamental theorem of projective geometry]\label{lemmaFTPG}
Let $K$ and $L$ be fields and let $X$ and $Y$ be vector spaces over $K$ and $L$, respectively, with $\dim_KX\geq3$. Then, every lattice isomorphism $\Phi:\sigma_K(X)\rightarrow\sigma_L(Y)$ is induced by a $\psi$-semilinear isomorphism $\phi:X\rightarrow Y$, where $\psi:K\rightarrow L$ is a field isomorphism.
\end{thm}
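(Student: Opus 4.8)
The plan is to reduce the lattice isomorphism $\Phi$ to a dimension-preserving collineation of projective spaces and then to reconstruct a semilinear map from the incidence data. First I would record that $\Phi$ preserves dimension. The dimension of a subspace $S$ is an intrinsically order-theoretic quantity, namely the common length of the maximal chains in the interval $[\{0\},S]$ of $\sigma_K(X)$; since $\Phi$ is an order isomorphism it carries maximal chains to maximal chains of the same length, so $\dim_L\Phi(S)=\dim_K S$ for every $S$, and in particular $\dim_L Y=\dim_K X$. Consequently $\Phi$ sends atoms to atoms and $2$-dimensional subspaces to $2$-dimensional subspaces; writing $\mathbb{P}(X)$ for the set of $1$-dimensional subspaces, its restriction $\bar\phi\colon\mathbb{P}(X)\to\mathbb{P}(Y)$ is a bijection. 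Because $\Phi$ preserves joins, three points $p,q,r\in\mathbb{P}(X)$ satisfy $r\subset p\vee q$ if and only if $\bar\phi(r)\subset\bar\phi(p)\vee\bar\phi(q)$, so $\bar\phi$ is a collineation preserving and reflecting collinearity.

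Next I would reconstruct the semilinear map, treating first the finite-dimensional case $n=\dim_K X\geq3$. Fix a basis $e_1,\dots,e_n$ of $X$ together with the unit point $\langle e_1+\cdots+e_n\rangle$; these points form a projective frame, and since collineations preserve general position their images form a frame in $Y$. Choose a basis $f_1,\dots,f_n$ of $Y$ with $\langle f_i\rangle=\bar\phi(\langle e_i\rangle)$, scaled so that $\bar\phi(\langle e_1+\cdots+e_n\rangle)=\langle f_1+\cdots+f_n\rangle$, which is possible precisely because the images form a frame; in the resulting homogeneous coordinates $\bar\phi$ fixes every frame point. The heart of the matter is then von~Staudt's algebra of throws: on a projective line carrying three distinguished points in the roles of $0,1,\infty$, the field operations of $K$ can be defined purely by incidence, using auxiliary points in a plane through the line, and these constructions are legitimate because $\dim_K X\geq3$ provides such a plane and, via Desargues' theorem, their independence of the auxiliary choices. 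Since $\bar\phi$ transports these constructions, one extracts a bijection $\psi\colon K\to L$ respecting addition and multiplication, that is, a field isomorphism, and on coordinates $\bar\phi$ acts by
\[
[x_1:\cdots:x_n]\longmapsto[\psi(x_1):\cdots:\psi(x_n)].
\]
This is exactly the projectivization of the $\psi$-semilinear isomorphism $\phi\bigl(\sum_i x_i e_i\bigr)=\sum_i \psi(x_i)f_i$, which therefore induces $\Phi$ on $\sigma_K(X)$.

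For the general case I would apply the finite-dimensional result to each finite-dimensional subspace of dimension at least $3$. The field isomorphism and the semilinear map obtained on two such subspaces agree on their intersection once a common frame is fixed, so the pieces are compatible and patch together; passing to the direct limit over the finite-dimensional subspaces yields $\psi$ and $\phi$ on all of $X$, and the required $\psi$-semilinearity and inducedness are inherited from the finite-dimensional pieces.

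The hard part will be the von~Staudt reconstruction together with the verification that the collineation respects the reconstructed field structure: one must set up the geometric definitions of $+$ and $\times$ with care, confirm that they are well defined independently of the auxiliary points (this is exactly where Desargues' theorem, available because the projective dimension is at least $2$, is indispensable), and check that the normalization to a fixed frame loses no generality. A further delicate point is the bookkeeping needed to choose the representative vectors $f_i$ so that $\phi$ is genuinely additive rather than merely projectively linear.
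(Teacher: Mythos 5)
This statement is one the paper does not prove at all: it is quoted as a classical result and delegated to Baer \cite[pp.~44--50]{R.Bae}, with Remark \ref{remarkFTPG} only noting that Baer's argument survives restriction to the finite-dimensional sublattices. So the comparison is really with Baer's proof. Your outline is a correct plan for the other standard route: you pass from the lattice isomorphism to a collineation of projective spaces and then coordinatize synthetically via a projective frame and von Staudt's algebra of throws, whereas Baer works algebraically inside the lattice, directly constructing an additive map $\phi$ by choosing representative vectors (the image of $x+b$ is pinned down as the intersection of $\sigma(\langle x\rangle+\langle b\rangle)$ with a suitable translated line) and extracting $\psi$ afterwards from $\phi(\alpha x)\in\sigma(\langle x\rangle)$. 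The synthetic route makes the role of Desargues and of $\dim_K X\ge 3$ transparent; Baer's route avoids setting up the throws machinery and adapts more painlessly to the infinite-dimensional and finite-field cases. Your plan is sound, but be aware that everything you label ``the hard part'' is in fact the entire content of the theorem, so as written this is an outline rather than a proof.

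Two specific soft spots to repair if you execute the plan. First, your opening claim that dimension is ``the common length of the maximal chains in $[\{0\},S]$'' is only a valid definition for finite-dimensional $S$; for infinite-dimensional $S$ you should instead characterize $\dim S$ as, say, the least cardinality of a set of atoms whose join is $S$ (or simply not assert $\dim_L Y=\dim_K X$ up front, since it falls out of the semilinear isomorphism at the end). Second, in the patching step the semilinear map inducing $\Phi$ on a finite-dimensional subspace is unique only up to a scalar multiple, so ``agree on their intersection once a common frame is fixed'' needs the normalization made explicit: fix one vector $e_1$ and one representative $f_1\in\Phi(\langle e_1\rangle)$ once and for all, and observe that a semilinear automorphism fixing every line of a space of dimension at least $2$ is a scalar, so the requirement $\phi(e_1)=f_1$ rigidifies each finite-dimensional piece and makes the direct limit well defined.
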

\begin{rmk}\label{remarkFTPG}
The proof of Theorem \ref{lemmaFTPG} in \cite[pp. 44--50]{R.Bae} is still valid for a lattice isomorphism between the sublattices of $\sigma_K(X),\sigma_L(Y)$ consisting of finite-dimensional subspaces; it is also induced by a semilinear isomorphism between $X$ and $Y$.
\end{rmk}

%-------------------------
\subsection{Galois connection}

We introduce the maps $\mathfrak{S},\mathfrak{T}$ to connect the lattice of vector topologies and the lattice of subspaces, which form geometries on vector spaces.
\begin{dfn}\label{mathfrakS}
Let $(X,T)$ be a topological vector space. We define a subset $\mathfrak{S}_X(T)$ of $X$ by
\begin{align*}
\mathfrak{S}_X(T)=\bigcap_{0\in U \in T}U.
\end{align*}
\end{dfn}

We see that $\mathfrak{S}_X(T)$ is a vector subspace of $X$. By definition, $0$ belongs to $\mathfrak{S}_X(T)$. For two elements $x,y$ from $\mathfrak{S}_X(T)$ and an open neighborhood $U$ of zero, by the continuity of the addition at $(0,0)\in X\times X$, there exists an open neighborhood $V$ of zero such that $V+V\subset U$ holds. Since $x,y$ belong to $V$ by definition of $\mathfrak{S}_X(T)$, the sum $x+y$ belongs to $U$. Thus, $\mathfrak{S}_X(T)$ is closed under the addition. For $\alpha\in K$ and $x\in \mathfrak{S}_X(T)$, let $U$ be an arbitrary neighborhood of zero. By the continuity of scalar multiple at $(\alpha,0)\in K\times X$, there exists a neighborhood $V$ of zero such that $\alpha\cdot V\subset U$. By definition of $\mathfrak{S}_X(T)$, the element $x$ belongs to $V$, which implies $\alpha\cdot x\in U$. Thus, $\mathfrak{S}_X(T)$ is closed under the scalar multiple. Therefore, the subset $\mathfrak{S}_X(T)$ is a $K$-vector subspace for any $T\in \tau_K(X)$, and $\mathfrak{S}_X$ is a map from $\tau_K(X)$ to $\sigma_K(X)$.

\begin{dfn}\label{mathfrakT}
Let $S$ be a vector subspace of a vector space $X$. We define a topology $\mathfrak{T}_X(S)$ by
\begin{align*}
\mathfrak{T}_X(S)=\{U+S \mid U\in T^{\max}_X\},
\end{align*}
where the topology $T^{\max}_X$ is the maximum element (the strongest topology) in ${(\tau_K(X),\subset)}$.
\end{dfn}
The topology $\mathfrak{T}_X(S)$ coincides with the topology defined by ${\pi_S}^*\circ{\pi_S}_*(T^{\max}_X)$ and this is a vector topology, where $\pi_S:X\rightarrow X/S$ is the natural quotient map. Therefore, $\mathfrak{T}_X$ is a map from $\sigma_K(X)$ to $\tau_K(X)$.

\begin{lem}\label{composition}
The following {\rm(1)}, {\rm(2)} and {\rm(3)} hold true.\\
\begin{enumerate}
\item[\rm(1)]~The composition $\mathfrak{S}_X\circ \mathfrak{T}_X$ is the identity map on the lattice $\sigma_K(X)$ of subspaces.
\item[\rm(2)]~For every $T\in\tau_K(X)$, we have $T\subset \mathfrak{T}_X\circ \mathfrak{S}_X(T)$. 
\item[\rm(3)]~For $T\in\tau_K(X)$, the subspace $\mathfrak{S}_X(T)$ is 0-dimensional if and only if $T$ is a Hausdorff topology.
\end{enumerate}
\end{lem}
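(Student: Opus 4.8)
The plan is to route everything through the quotient map $\pi_S\colon X\to X/S$, using the presentation $\mathfrak{T}_X(S)=\pi_S^*\circ(\pi_S)_*(T^{\max}_X)$ recorded after Definition~\ref{mathfrakT}. The technical heart is the identity
\[
(\pi_S)_*(T^{\max}_X)=T^{\max}_{X/S}.
\]
I would prove this from the facts, noted above for linear maps, that both induced maps carry vector topologies to vector topologies: thus $(\pi_S)_*(T^{\max}_X)$ is a vector topology on $X/S$ and hence is $\subset T^{\max}_{X/S}$; conversely $\pi_S^*(T^{\max}_{X/S})$ is a vector topology on $X$, so it is $\subset T^{\max}_X$, and applying $(\pi_S)_*$ together with the identity $(\pi_S)_*\circ\pi_S^*=\mathrm{id}$ (valid because $\pi_S$ is surjective) yields the reverse inclusion. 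I would also record at the outset the identification $\mathfrak{S}_X(T)=\overline{\{0\}}$, the $T$-closure of the origin, which holds because the neighborhoods of any point of a topological vector space are exactly the translates of those of $0$.

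Granting this, I would prove (3) first. If $T$ is Hausdorff then it is $T_1$, so $\overline{\{0\}}=\{0\}$ and $\mathfrak{S}_X(T)$ is $0$-dimensional. Conversely, if $\mathfrak{S}_X(T)=\{0\}$ then $\{0\}$ is $T$-closed; given distinct $x,y$, the point $x-y\neq 0$ lies outside some neighborhood $U$ of $0$, and choosing a symmetric neighborhood $V$ of $0$ with $V-V\subset U$ (by continuity of subtraction) makes $x+V$ and $y+V$ disjoint, so $T$ is Hausdorff.

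For (1), the neighborhoods of $0$ in $\mathfrak{T}_X(S)=\pi_S^*(T^{\max}_{X/S})$ are precisely the sets $\pi_S^{-1}(V)$ with $V$ a neighborhood of the zero coset, so intersecting them gives
\[
\mathfrak{S}_X\circ\mathfrak{T}_X(S)=\pi_S^{-1}\bigl(\mathfrak{S}_{X/S}(T^{\max}_{X/S})\bigr).
\]
Since $T^{\max}_{X/S}$ is a Hausdorff vector topology (by the proposition above applied to $X/S$), part (3) gives $\mathfrak{S}_{X/S}(T^{\max}_{X/S})=\{0\}$, whence $\mathfrak{S}_X\circ\mathfrak{T}_X(S)=\pi_S^{-1}(\{0\})=S$. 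For (2), the crucial observation is that, with $S=\mathfrak{S}_X(T)$, every $T$-open set $O$ is $S$-saturated: for $x\in O$ the translate $-x+O$ is a $T$-neighborhood of $0$, so $S\subset -x+O$, i.e.\ $x+S\subset O$; hence $O=O+S=\pi_S^{-1}(\pi_S(O))$. Then $\pi_S(O)\in(\pi_S)_*(T)\subset T^{\max}_{X/S}$, and therefore $O=\pi_S^{-1}(\pi_S(O))\in\pi_S^*(T^{\max}_{X/S})=\mathfrak{T}_X(S)$, which gives $T\subset\mathfrak{T}_X\circ\mathfrak{S}_X(T)$.

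The main obstacle is not any single computation but pinning down the quotient identity $(\pi_S)_*(T^{\max}_X)=T^{\max}_{X/S}$, and, for (2), isolating the saturation property of $T$-open sets, which is exactly what permits the descent to $X/S$ and the comparison of the pushforward with the maximal topology. Once these are in hand, (1) reduces to the separation behavior of $T^{\max}_{X/S}$ via part (3), and (3) itself reduces to the standard topological-vector-space argument that $T_1$ and Hausdorff coincide.
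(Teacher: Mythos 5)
Your proof is correct and follows essentially the same route as the paper: both hinge on the identity $(\pi_S)_*(T^{\max}_X)=T^{\max}_{X/S}$ (proved the same way), the Hausdorffness of $T^{\max}_{X/S}$, and the $S$-saturation $V=V+S$ of $T$-open sets for $S=\mathfrak{S}_X(T)$. The only cosmetic difference is in (1), where you compute $\mathfrak{S}_X\circ\mathfrak{T}_X(S)=\pi_S^{-1}\bigl(\mathfrak{S}_{X/S}(T^{\max}_{X/S})\bigr)$ and invoke part (3) on the quotient, while the paper establishes the two inclusions separately and finishes the hard one by a contradiction using disjoint neighborhoods in $X/S$.
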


\begin{proof}
(1)~Let $S$ be a subspace of $X$. We show that
\begin{align*}
\bigcap_{0\in V \in T^{\max}_X}(V+S)=\bigcap_{0\in U+S,~U\in T^{\max}_X}(U+S)=\mathfrak{S}_X\circ \mathfrak{T}_X(S).
\end{align*}
The second equality follows from definitions of $\mathfrak{S}_X$ and $\mathfrak{T}_X$. For the first equality, since 0 belongs to $S$, the set $V+S$ contains 0 for every $0\in V\in T^{\max}_X$, which implies that $\bigcap_{0\in V \in T^{\max}_X}(V+S)\supset\bigcap_{0\in U+S,~U\in T^{\max}_X}(U+S)$. For the other inclusion, let $U+S=\bigcup_{x\in S}U+x~(U\in T^{\max}_X)$ be an open neighborhood of zero with respect to $T^{\max}_X$. By the continuity of the addition at $(0,0)\in X\times X$, there exists an open neighborhood $V \in T^{\max}_X$ of zero such that $V\subset U+S$. Since $S$ is a subspace, $V+S$ is also included in $U+S$. Therefore, we have $\bigcap_{0\in V \in T^{\max}_X}(V+S)=\mathfrak{S}_X\circ \mathfrak{T}_X(S)$.\\
By definition, $\bigcap_{0\in V\in T^{\max}_X}V+S$ contains $S$, which implies $S\subset \mathfrak{S}_X\circ \mathfrak{T}_X(S)$. Before we prove the other inclusion, we show that $\pi_*(T^{\max}_X)=T^{\max}_{X/S}$, where $\pi:X\rightarrow X/S$ is the quotient map. 
Since $\pi_*$ and $\pi^*$ send vector topologies to vector topologies, by definition of the maximum elements, $\pi_*(T^{\max}_X)\subset T^{\max}_{X/S}$ and $\pi^*(T^{\max}_{X/S})\subset T^{\max}_X$ holds. From the latter inclusion, $\pi^{-1}(V)\in T^{\max}_X$ holds for every $V\in T^{\max}_{X/S}$, which implies that $T^{\max}_{X/S}\subset \pi_*(T^{\max}_X)$ holds.  
Now, assume that we can take a point $x$ from $\mathfrak{S}_X\circ \mathfrak{T}_X(S)\setminus S$. Since $(X/S,T^{\max}_{X/S})$ is a Hausdorff space, there are disjoint open neighborhoods $V_1,V_2\in T^{\max}_{X/S}=\pi_*(T^{\max}_X)$ of $\pi(x)$ and $\pi(0)$, respectively. Since ${\rm Ker}~\pi=S$, we have $\pi^{-1}(V_2)+S=\pi^{-1}(V_2)\in T^{\max}_X$, which implies that the set $\pi^{-1}(V_2)$ is an open neighborhood of zero in $\mathfrak{T}_X(S)$. 
Thus, $x$ belongs to $\mathfrak{S}_X\circ \mathfrak{T}_X(S) \subset \pi^{-1}(V_2)$ and $\pi^{-1}(V_1)$. This contradicts that $\pi^{-1}(V_1)$ and $\pi^{-1}(V_2)$ are disjoint.\par
(2)~We denote by $S$, the subspace $\mathfrak{S}_X(T)$. For an open subset $V\in T$, we show that $V+S=V$ holds. Since $S$ contains zero, $V+S\supset V$ is clear. For the other inclusion, we take $v+s,~v\in V,s\in S$ from $V+S$. By the continuity of the addition at $(v,0)\in X\times X$ with respect to $T$, there are two open subsets $U_1\ni v,~U_2\ni 0$ such that $U_1+U_2\subset V$. By definition of $\mathfrak{S}_X(T)$, the element $s$ belongs to $U_2$, and thus $v+s\in V$. Since $T^{\max}_X$ contains $T$, the set $V=V+S$ belongs to $\mathfrak{T}_X\circ\mathfrak{S}_X(T)$.\par
(3)~Assume that $\mathfrak{S}_X(T)$ is 0-dimensional. Let $x_1,x_2$ be two different elements of $X$. Since $x_1-x_2\not=0$ does not belong to $\mathfrak{S}_X(T)$, we have an open neighborhood $U$ of zero with $x_1-x_2\not\in U$. By the continuity of the map $X\times X\ni (x,x')\mapsto x-x'\in X$, there is an open neighborhood $V\in T$ of zero such that $V-V\subset U$. Then, $x_1+V$ and $x_2+V$ are disjoint open neighborhoods of $x_1$ and $x_2$, respectively. \\
Assume that $(X,T)$ is a Hausdorff space. Then, for every non-zero element $x\in X$, there is an open neighborhood $U$ of zero with $x\not\in U$. By definition, $U$ contains $\mathfrak{S}_X(T)$, and thus $x\not\in\mathfrak{S}_X(T)$.
\end{proof}

\begin{rmk}
By definition, $\mathfrak{S}_X:\tau_K(X)\rightarrow\sigma_K(X)$ and $\mathfrak{T}_X:\sigma_K(X)\rightarrow\tau_K(X)$ reverse the inclusion. Combining with the result of Lemma \ref{composition}, the pair of maps $(\mathfrak{S}_X,\mathfrak{T}_X)$ is an {\it antitone Galois connection}. \\
Here, an {\it antitone Galois connection} is a pair $(f,g)$ of two maps $f:\mathbb{P}\rightarrow\mathbb{Q}$ and $g:\mathbb{Q}\rightarrow\mathbb{P}$ between two posets $(\mathbb{P},\leq_{\mathbb{P}})$ and $(\mathbb{Q},\leq_{\mathbb{Q}})$ such that
\begin{itemize}
\item{}~$f$ and $g$ reverse the orders, and
\item{}~$q\leq_{\mathbb{Q}} f(p)$ if and only if $p\leq_{\mathbb{P}} g(q)$ for $p\in\mathbb{P}$ and $q\in \mathbb{Q}$
\end{itemize}
hold.
\end{rmk}

%-----------------------------------------------------------
\section{Lattice of topologies and lattice of vector topologies}

In this section, we prove the following main theorem.
\begin{thm}\label{main thm}
Let $K,L$ be topological fields and $X,Y$ be vector spaces over $K$ and $L$, respectively, with $\dim_K X\geq2$. Suppose that there is a lattice isomorphism $\Phi:\Sigma(X)\rightarrow \Sigma(Y)$ such that $\Phi(\tau_K(X))$ coincides with $\tau_L(Y)$. Then, there exists a unique triple $(\psi,\phi,y_0)$ such that
\begin{itemize}
\item if the cardinality of $X$ is infinite, then the induced map $(\phi+y_0)_*$ is equal to $\Phi$, and
\item if the cardinality of $X$ is finite, then the induced map $(\phi+y_0)_*$ is equal to $\Phi$ or $C\circ \Phi$,
\end{itemize}
where $\psi:K\rightarrow L$ is an isomorphism between topological fields, $\phi:X\rightarrow Y$ is a $\psi$-semilinear isomorphism 
and $y_0$ is an element of $Y$.
\end{thm}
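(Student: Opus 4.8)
\emph{Step 1 (producing and normalizing a bijection).} Since $\dim_K X\ge 2$ forces $|X|\ge 4$, Lemma \ref{Hartmanis} applies and yields a unique bijection $\theta\colon X\to Y$ with $\Phi=\theta_*$ (if $|X|$ is infinite) or $\Phi\in\{\theta_*,\,C\circ\theta_*\}$ (if $|X|$ is finite). I first want $\theta_*(\tau_K(X))=\tau_L(Y)$ in every case, which is immediate when $\Phi=\theta_*$. When $|X|$ is finite and $\Phi=C\circ\theta_*$, observe that then $K$, hence $L$, is a finite and necessarily discrete field, so $T^{\max}_X$ is discrete and, by Definition \ref{mathfrakT}, every vector topology is the partition topology given by the cosets of its $\mathfrak{S}$-subspace; such a family of open sets is closed under complementation, so $C$ fixes $\tau_L(Y)$ pointwise and $\theta_*(\tau_K(X))=C(\Phi(\tau_K(X)))=C(\tau_L(Y))=\tau_L(Y)$ as well. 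From now on $\theta_*\colon\tau_K(X)\to\tau_L(Y)$ is an order isomorphism, and it suffices to prove that $\theta$ is semiaffine: the two displayed conclusions then follow by composing with $\theta_*$ (and, in the finite case, with $C$), while uniqueness of $(\psi,\phi,y_0)$ reduces to uniqueness of $\theta$ together with $y_0=\theta(0)$, $\phi=\theta-y_0$, and $\psi$ being pinned down by $\phi(\alpha x)=\psi(\alpha)\phi(x)$.

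\emph{Step 2 (transporting the subspace lattice).} The heart of the argument is to manufacture, from $\theta_*$ and the Galois connection of Lemma \ref{composition}, a lattice isomorphism of subspace lattices while simultaneously controlling $\theta$ on cosets. For a subspace $S\subset X$ set $\Psi(S)=\mathfrak{S}_Y(\theta_*\mathfrak{T}_X(S))$, which lies in $\sigma_L(Y)$ because $\mathfrak{T}_X(S)\in\tau_K(X)$ and $\theta_*$ preserves vector topologies. The map $\theta$ is a homeomorphism from $(X,\mathfrak{T}_X(S))$ onto $(Y,\theta_*\mathfrak{T}_X(S))$, so it carries topological-indistinguishability classes to topological-indistinguishability classes; since in any vector topology $T'$ the class of a point $y$ is the coset $y+\mathfrak{S}_Y(T')$, and $\mathfrak{S}_X(\mathfrak{T}_X(S))=S$ by Lemma \ref{composition}(1), $\theta$ sends the partition of $X$ into cosets of $S$ onto the partition of $Y$ into cosets of $\Psi(S)$, and in particular $\theta(x+S)=\theta(x)+\Psi(S)$. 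Running the same construction for $\theta^{-1}$ produces a map $\sigma_L(Y)\to\sigma_K(X)$ which, because a coset partition determines its direction subspace, is a two-sided inverse of $\Psi$; and $\Psi=\mathfrak{S}_Y\circ\theta_*\circ\mathfrak{T}_X$ is order preserving as a composite of two order-reversing maps and one order-preserving map, with the same holding for its inverse. Hence $\Psi\colon\sigma_K(X)\to\sigma_L(Y)$ is a lattice isomorphism.

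\emph{Step 3 (affine geometry).} A lattice isomorphism of subspace lattices sends atoms to atoms, that is, one-dimensional subspaces to one-dimensional subspaces. Thus for each line $l\in\sigma_K(X)$ the subspace $\Psi(l)$ is again a line, and by Step 2 the bijection $\theta$ carries every affine line $x+l$ onto the affine line $\theta(x)+\Psi(l)$. In particular two parallel lines of $X$, having a common direction $l$, go to two cosets of the common direction $\Psi(l)$, i.e.\ to parallel lines of $Y$. Therefore $\theta$ satisfies the hypothesis of the fundamental theorem of affine geometry (Theorem \ref{lemmaFTAG}, using $\dim_K X\ge 2$), and $\theta$ is semiaffine: $\theta=\phi+y_0$ with $y_0=\theta(0)$ and $\phi$ a $\psi$-semilinear isomorphism for some field isomorphism $\psi\colon K\to L$.

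\emph{Step 4 (the field isomorphism is topological), and conclusion.} It remains to promote the algebraic isomorphism $\psi$ to an isomorphism of topological fields, which I expect to be the main obstacle, since only here do the ambient topologies of $K$ and $L$ enter. As translations are homeomorphisms fixing every vector topology, $\phi_*=(\,\cdot-y_0\,)_*\circ\theta_*$ still restricts to a bijection $\tau_K(X)\to\tau_L(Y)$. I plan to test this bijection on the vector topologies obtained by pulling back the field topology $\mathcal{O}_K$ of $K$ along a linear functional $f\colon X\to K$, the construction already used to show $T^{\max}_X$ is Hausdorff: one computes $\phi_*(f^*\mathcal{O}_K)=g^*(\psi(\mathcal{O}_K))$ for the induced $L$-linear functional $g=\psi\circ f\circ\phi^{-1}$, so the demand that this be a genuine $L$-vector topology reduces to continuity of the multiplication on $L$ with the factor topologies $\mathcal{O}_L$ and $\psi(\mathcal{O}_K)$; applying this to $\phi_*$ and to $\phi_*^{-1}$ forces $\psi$ and $\psi^{-1}$ to be continuous, whence $\psi$ is an isomorphism of topological fields. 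Combining $\theta=\phi+y_0$ with Step 1 then gives $\Phi=(\phi+y_0)_*$, or $\Phi=C\circ(\phi+y_0)_*$ in the finite case, and the uniqueness of $(\psi,\phi,y_0)$ is precisely the uniqueness of $\theta$ recorded in Step 1.
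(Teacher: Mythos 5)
Your proposal is correct and follows the same skeleton as the paper's proof (Lemma \ref{Hartmanis} to get a bijection, the Galois connection of Lemma \ref{composition} to transport the subspace lattice and show cosets go to cosets, the fundamental theorem of affine geometry, the observation that $C$ fixes every vector topology on a finite space, and the same uniqueness argument), but two of your sub-arguments are genuinely different and both are sound. First, where the paper normalizes $\phi=\phi_0-y_0$ and verifies $\phi(a+S)=\phi(a)+\phi(S)$ by an explicit computation with the intersection formula $\phi(S)=\bigcap_{0\in U\in\mathfrak{T}_X(S)}\phi(U)$ and the $S$-invariance of the open sets of $\mathfrak{T}_X(S)$, you observe that the topological-indistinguishability class of $y$ in any vector topology $T'$ is exactly $y+\mathfrak{S}_Y(T')$ and that a homeomorphism preserves these classes; this packages the same Galois-connection idea more cleanly and lets you work with the unnormalized $\theta$ throughout. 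Second, for the continuity of $\psi$ the paper restricts $\phi$ to a line and identifies $\langle x_0\rangle$ with $T^{\max}_X$-subspace topology homeomorphically with $K$, whereas you pull back the field topology along a linear functional $f$, compute $\phi_*(f^*\mathcal{O}_K)=g^*(\psi_*\mathcal{O}_K)$, push forward by the surjection $g$ to see that $\psi_*\mathcal{O}_K$ must be an $L$-vector topology on $L$, and extract $\psi_*\mathcal{O}_K\subset\mathcal{O}_L$ from continuity of scalar multiplication at $(\alpha,1)$; run symmetrically this gives continuity of both $\psi$ and $\psi^{-1}$. This route has the advantage of not needing to identify the subspace topology induced by $T^{\max}_X$ on a line with the field topology of $K$. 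The one place you should tighten the write-up is the step from ``$g^*(\psi_*\mathcal{O}_K)$ is a vector topology on $Y$'' to ``$\psi_*\mathcal{O}_K$ is a vector topology on $L$'': this needs $g_*\circ g^*=\mathrm{id}$ for the surjective linear functional $g$ together with the fact (stated in the paper's Section 2.3) that $g_*$ sends vector topologies to vector topologies.
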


We have the following immediate corollary of Theorem \ref{main thm}.
\begin{cor}\label{cor main thm}
If $\Sigma(X)$ is isomorphic to $\Sigma(Y)$ by a lattice isomorphism whose restriction to $\tau_K(X)$ is a lattice isomorphism between $\tau_K(X)$ and $\tau_L(Y)$, then two topological fields $K,L$ are isomorphic and $X,Y$ are semilinear isomorphic. In particular, the dimensions of $X$ over $K$ and $Y$ over $L$ are equal.
\end{cor}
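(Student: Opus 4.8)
The plan is to read this statement off directly from Theorem~\ref{main thm}, since the two hypotheses amount to the same condition once unwound. First I would check that the restriction hypothesis of the corollary is equivalent to the set-theoretic equality $\Phi(\tau_K(X)) = \tau_L(Y)$ demanded in Theorem~\ref{main thm}. A lattice isomorphism is in particular a bijection, so if $\Phi|_{\tau_K(X)}$ is a lattice isomorphism onto $\tau_L(Y)$, then $\Phi$ carries $\tau_K(X)$ bijectively onto $\tau_L(Y)$, which is exactly $\Phi(\tau_K(X)) = \tau_L(Y)$. Conversely, if $\Phi(\tau_K(X)) = \tau_L(Y)$, then $\Phi$ and $\Phi^{-1}$, both order-preserving, restrict to mutually inverse order-preserving bijections between the lattices $\tau_K(X)$ and $\tau_L(Y)$; such an order isomorphism between lattices is a lattice isomorphism, so the corollary's hypothesis is met. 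Thus the two formulations are interchangeable.

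Having matched the hypotheses, I would simply invoke Theorem~\ref{main thm} (applicable under its standing assumption $\dim_K X \geq 2$). It yields a triple $(\psi,\phi,y_0)$ in which $\psi:K\rightarrow L$ is an isomorphism of topological fields and $\phi:X\rightarrow Y$ is a $\psi$-semilinear isomorphism. The existence of $\psi$ is precisely the assertion that $K$ and $L$ are isomorphic as topological fields, and the existence of $\phi$ is the assertion that $X$ and $Y$ are semilinearly isomorphic.

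For the statement on dimensions, I would invoke the fact noted in the subsection on semilinear isomorphisms that a semilinear isomorphism preserves linear independence, and hence sends any basis of $X$ to a basis of $Y$; therefore $\dim_K X = \dim_L Y$.

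Since each step is an immediate consequence of results already established, there is no substantial obstacle in this corollary. The only point deserving a moment's care is the equivalence of the two hypotheses carried out in the first paragraph, which is what allows Theorem~\ref{main thm} to be applied without modification.
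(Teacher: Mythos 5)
Your proposal is correct and matches the paper, which states this corollary as an immediate consequence of Theorem~\ref{main thm} without further argument; your extra care in checking that the restriction hypothesis is equivalent to $\Phi(\tau_K(X))=\tau_L(Y)$, and that a semilinear isomorphism preserves dimension, only makes explicit what the paper leaves implicit.
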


\subsection{Result of J. Hartmanis}
In our proof of Theorem \ref{main thm}, we need the following lemma, which is essentially due to J. Hartmanis \cite[Theorem 4]{J.Har}. 
Although the original statement is on the group of lattice automorphisms of $\Sigma(X)$, the proof is valid for the case of lattice isomorphisms between different sets.
\begin{lem}\label{Hartmanis}
Let $X,Y$ be non-empty sets. Suppose that there is a lattice isomorphism $\Theta: \Sigma(X)\rightarrow\Sigma(Y)$. Then, there exists a unique bijection $\theta:X\rightarrow Y$ such that
\begin{itemize}
\item if the cardinality of $X$ is one, two or infinity, then the induced map $\theta_*$ is equal to $\Theta$, and
\item if the cardinality of $X$ is finite and more than two, then the induced map $\theta_*$ is equal to $\Theta$ or $C\circ\Theta$.
\end{itemize}
\end{lem}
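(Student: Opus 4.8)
The plan is to follow the strategy of Hartmanis while checking that it never uses $X=Y$, organizing it around the observation that $\Sigma(X)$ is an \emph{atomistic} complete lattice. First I would identify the atoms: the bottom element of $\Sigma(X)$ is the indiscrete topology $\{\emptyset,X\}$, and a topology is an atom (covers the bottom) precisely when it has a single proper nonempty open set, i.e.\ it has the form $T_A=\{\emptyset,A,X\}$ for some $\emptyset\neq A\subsetneq X$. Since an order-preserving bijection with order-preserving inverse preserves the bottom element and the covering relation, $\Theta$ restricts to a bijection between the atoms of $\Sigma(X)$ and those of $\Sigma(Y)$, hence to a bijection $\alpha$ between the proper nonempty subsets of $X$ and those of $Y$ characterized by $\Theta(T_A)=T_{\alpha(A)}$.

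Next I would reduce the whole isomorphism to $\alpha$. Every topology $T$ equals the join of the atoms below it, because the topology generated by the proper nonempty open sets of $T$ is $T$ itself; thus $\Sigma(X)$ is atomistic. As a lattice isomorphism between complete lattices preserves arbitrary suprema, we obtain $\Theta(T)=\bigvee_{A\in T}T_{\alpha(A)}$, namely the topology on $Y$ generated by $\{\alpha(A)\}$. Hence $\Theta$ is completely determined by $\alpha$, and the problem becomes purely combinatorial: to describe which subset-bijections $\alpha$ can arise. (Dually one could run the same reduction using coatoms, the ultraspaces, as Hartmanis does.)

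Then I would extract the constraints on $\alpha$ forced by $\Theta$ being a lattice isomorphism. Computing $T_A\vee T_B$ for two atoms shows that the atoms below it are exactly the proper nonempty members of the topology $\langle A,B\rangle$ generated by $A,B$; in particular $T_A\vee T_B$ has exactly four open sets iff $A,B$ are comparable or complementary, while otherwise it has five or six. So $\alpha$ must carry $A\cap B$ and $A\cup B$ (when proper and nonempty) into $\langle\alpha(A),\alpha(B)\rangle$, and it must preserve the ``comparable-or-complementary'' relation. From these relations I would reconstruct the inclusion order on subsets up to the global flip $A\mapsto X\setminus A$, single out the atoms corresponding to singletons and to their complements, and thereby define a bijection $\theta:X\to Y$. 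One then checks that $\theta_*$ agrees with $\Theta$ on every atom, hence — by the atomistic reduction — everywhere, possibly after composing with the complement map $C$ to absorb the flip.

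Finally I would settle the cardinality dichotomy and uniqueness. The reconstruction leaves one ambiguity, namely whether open-point atoms are matched with open-point or with closed-point atoms, and this ambiguity is precisely $C$. For finite $X$ the map $C$ is itself a lattice automorphism of $\Sigma(X)$, so both $\Theta=\theta_*$ and $\Theta=C\circ\Theta$ of the form $\theta_*$ can occur; for $|X|=2$ the complement map coincides with a point-swap and $|X|=1$ is trivial, so only the $\theta_*$ form is needed; for infinite $X$ the map $C$ fails to send topologies to topologies (the complement of an arbitrary intersection of open sets need not be open), so $C$ is not a lattice automorphism and the flip is excluded, forcing $\Theta=\theta_*$. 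Uniqueness of $\theta$ is immediate, since $\theta_*$ recovers $\theta$ through its action on the open-point atoms $\{\emptyset,\{x\},X\}$. The main obstacle is the third step: proving that the abstract bijection $\alpha$ must in fact be induced by a point map up to complementation, i.e.\ reconstructing the point structure of $X$ from the lattice alone. This is the technical heart of Hartmanis's argument, and the only thing I must verify is that it nowhere relies on $X=Y$, which it does not.
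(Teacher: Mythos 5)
Your outline follows the same route as the paper's proof: the atoms of $\Sigma(X)$ are the three-element topologies $A(D)=\{\emptyset,D,X\}$, the lattice is atomistic, so $\Theta$ is determined by its action on atoms, and the invariant you read off from $T_A\vee T_B$ (four versus five versus six open sets) is exactly Hartmanis's \emph{type} $t(p,q)$, the number of atoms below $p\vee q$, which the paper uses to split the atoms into singletons $\mathfrak{n}$, co-singletons $\mathfrak{m}$, and the rest $\mathfrak{l}$ (for $|X|\geq4$ every atom of $\mathfrak{l}$ has a partner of type $4$, which is how $\mathfrak{n}\cup\mathfrak{m}$ is isolated). Your closing remark that none of this uses $X=Y$ is precisely the content of the lemma, and the reconstruction of points that you defer to Hartmanis is carried out in the paper via the type table together with the relation $A(D)\subset\bigvee_{x\in D}A(\{x\})$, which forces $\Theta(A(D))=A(\theta(D))$.

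One step, however, is justified by an argument that would not survive being written out: the exclusion of the flip for infinite $X$. You argue that since $C$ fails to send topologies to topologies on an infinite set, $C$ is not a lattice automorphism, hence the flip is excluded. But an isomorphism $\Theta$ that matches singleton atoms with co-singleton atoms need not literally factor as $C\circ\theta_*$; by atomisticity it is the map $T\mapsto\bigvee_{D\in T}A_Y(Y\setminus\theta(D))$, which is a perfectly well-defined join-preserving map into $\Sigma(Y)$, so the mere non-existence of $C$ on $\Sigma(Y)$ produces no contradiction. The argument that does work, and the one the paper gives, is that $\bigvee_{x\in X}A_X(\{x\})$ is the discrete topology, i.e.\ the maximum of $\Sigma(X)$, while $\bigvee_{y\in Y}A_Y(Y\setminus\{y\})=\{Y,\emptyset\}\cup\{Y\setminus F\mid F\ \text{finite}\}$ is not the maximum of $\Sigma(Y)$ when $Y$ is infinite; since a lattice isomorphism preserves the maximum, the flip is impossible. (Equivalently, the flipped map sends both the discrete and the cofinite topologies on $X$ to the discrete topology on $Y$, so it cannot be injective.) With that repair, the rest of your sketch --- the finite dichotomy, the case $|X|=2$ where $C$ coincides with a point swap, and uniqueness of $\theta$ read off from the singleton atoms --- matches the paper.
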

Since it is clear that a bijection between two sets $X$ and $Y$ induces a lattice isomorphism between $\Sigma(X)$ and $\Sigma(Y)$, we obtain an immediate corollary of Lemma \ref{Hartmanis}:
\begin{cor}\label{Corollary of Hartmanis}
Two lattices $\Sigma(X)$ and $\Sigma(Y)$ are isomorphic if and only if $X$ and $Y$ have the same cardinality.
\end{cor}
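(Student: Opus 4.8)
The plan is to prove the biconditional by treating its two implications separately, with essentially all of the substance being carried by Lemma \ref{Hartmanis} in the forward direction.

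For the forward implication, I would assume that a lattice isomorphism $\Theta:\Sigma(X)\rightarrow\Sigma(Y)$ exists. Lemma \ref{Hartmanis} then produces a bijection $\theta:X\rightarrow Y$ directly, and the existence of a bijection is exactly the statement that $X$ and $Y$ have the same cardinality. Nothing further is needed here: the hypothesis of Lemma \ref{Hartmanis} is precisely the existence of such a $\Theta$, so the conclusion is immediate once the lemma is in hand.

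For the converse, I would start from a bijection $\theta:X\rightarrow Y$ and verify that the induced map $\theta_*$ is a lattice isomorphism. First I would check that $\theta_*(T)$ is genuinely a topology on $Y$ for each $T\in\Sigma(X)$: since $\theta^{-1}$ preserves $\emptyset$, the whole set, arbitrary unions, and finite intersections, the family $\{V\subset Y \mid \theta^{-1}(V)\in T\}$ is closed under these operations. Next I would exhibit $(\theta^{-1})_*$ as a two-sided inverse of $\theta_*$, so that $\theta_*$ is a bijection between $\Sigma(X)$ and $\Sigma(Y)$. Finally, since $T_1\subset T_2$ implies $\theta_*(T_1)\subset\theta_*(T_2)$, and likewise for the inverse, the map $\theta_*$ is an order-preserving bijection with order-preserving inverse, hence a lattice isomorphism by the characterization recalled in the preliminaries.

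As for obstacles, there is no real difficulty in the corollary itself: both directions are formal once Lemma \ref{Hartmanis} is available, and the converse is a routine verification of the standard functorial properties of $\theta_*$. The only point worth isolating is that one uses merely the \emph{existence} of an underlying bijection from Lemma \ref{Hartmanis}, rather than the finer dichotomy between $\Theta=\theta_*$ and $\Theta=C\circ\theta_*$, since that refinement is irrelevant for comparing cardinalities.
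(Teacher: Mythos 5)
Your proposal is correct and follows exactly the paper's route: the forward direction extracts the bijection from Lemma \ref{Hartmanis}, and the converse is the (stated-as-clear-in-the-paper) observation that a bijection induces a lattice isomorphism via $\theta_*$. The only difference is that you spell out the routine verification that the paper leaves implicit.
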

We give a proof according to \cite[Theorem 4]{J.Har}. 
Before the proof, we recall some notions and their properties. An element $a$ of a lattice $L$ with the minimum element $0$ is called an {\it atom} if $a$ is not $0$ and there are no elements between them. Namely, $a$ is the next weakest element to $0$. 
Given a set $X$ with $|X|\geq2$, every atom of lattice of topologies can be written as 
\begin{align*}
A_X(D)=\{\emptyset,D,X\},
\end{align*}
where $D$ is a subset $\emptyset\subsetneq D\subsetneq X$. 
We denote by $\mathfrak{p}_X$, the set of all atoms of $\Sigma(X)$. 
We abbreviate $A_X(D)$ and $\mathfrak{p}_X$ to $A(D)$ and $\mathfrak{p}$, respectively, if the underlying set $X$ is clear. 
For a point $x\in X$, we denote atoms $A(\{x\})$ and $A(X\setminus \{x\})$ by $A(x)$ and $A(x^c)$, respectively. \\
Recall that a complete lattice is said to be {\it atomic} if every element can be written as a supremum of a set of atoms. Now, let $T$ be an arbitrary topology on $X$. Then, we define a topoplogy $T'$ as a supremum of a set of atoms by
\begin{align*}
T'=\bigvee_{D\in T,\emptyset\subsetneq D\subsetneq X}A(D).
\end{align*}
It is clear that every $A(D)$ is weaker than $T$ and $T'$ contains all open subsets of $T$. Thus, two topologies $T$ and $T'$ coincide. Therefore, the lattice of topologies $\Sigma(X)$ is atomic, including the case when $|X|=1$.\\
When the cardinality of $X$ is more than 2, we decompose $\mathfrak{p}$ into three disjoint subsets $\mathfrak{n},\mathfrak{m}$ and $\mathfrak{l}$ defined by
\begin{align*}
\mathfrak{n}&=\{A(x)\mid x\in X\},\\
\mathfrak{m}&=\{A(x^c)\mid x\in X\},\\
\mathfrak{l}&=\mathfrak{p}\setminus(\mathfrak{n}\cup\mathfrak{m}). 
\end{align*}
Now, we recall that the function $t:\mathfrak{p}\times\mathfrak{p}\rightarrow \mathbb{N}$, called {\it type} in \cite{J.Har} is defined by the number of atoms weaker than or equal to the join $p\vee q$ for a pair of atoms $(p,q)$. 
Let $p=A(D_p)$ and $q=A(D_q)$ be two atoms. Then, the join $p\vee q$ can be written explicitly as
\begin{align*}
\{\emptyset, D_p\cap D_q, D_p,D_q, D_p\cup D_q, X\}.
\end{align*}
Thus, the type function takes values at most 4. We see more precisely what values the type takes with respect to the decomposition $\mathfrak{p}=\mathfrak{n}\sqcup\mathfrak{m}\sqcup\mathfrak{l}$. Let $p,q\in\mathfrak{n}$ be two different atoms and let $x_p,x_q\in X$ be two points such that $A(x_p)=p, A(x_q)=q$. The join $p\vee q$ is $\{\emptyset,\{x_p\}, \{x_q\}, \{x_p,x_q\}, X\}$, which is stronger than three atoms $A(x_p),A(x_q), A(\{x_p,x_q\})$.  Thus, the type $t(p,q)$ is 3 in this case. By continuing similar arguments, we obtain the following Table 1.
\begin{table}[h]
  \centering
  \begin{tabular}{c||c|c|c}
     & $p\in\mathfrak{n}$ &$p\in \mathfrak{m}$ &$p\in\mathfrak{l}$\\\hline \hline
    $q\in\mathfrak{n}$   & 3 & 2 	&2 or 3\\\hline
    $q\in\mathfrak{m}$  & 2 & 3&2 or 3 \\ \hline
    $q\in\mathfrak{l}$    &2 or 3& 2 or 3&2 or 3 or 4  
  \end{tabular}
  \caption{possible values of the type $t(p,q)$ with $p\not=q$} 
  \label{type}
\end{table}

Moreover, when $4\leq |X|$, for every $p=A(D)\in\mathfrak{l}$, there is an atom $q=A(D')\in\mathfrak{l}$ such that $t(p,q)$ is actually $4$ by taking $D'=\{x_1,x_2\},~x_1\in D,~x_2\in X\setminus D$. 

\begin{proof}[Proof of Lemma \ref{Hartmanis}]
Since atoms are characterized by the order, the restriction of the order preserving isomorphism $\Theta$ to the set of atoms $\mathfrak{p}_X$ induces a bijection between $\mathfrak{p}_X$ and $\mathfrak{p}_Y$, thus $\mathfrak{p}_X$ and $\mathfrak{p}_Y$ have the same cardinality. The set of atoms $\mathfrak{p}_X$ is a finite set if and only if $X$ is finite. Moreover, $|\mathfrak{p}_X|$ is equal to $|\mathcal{P}(X)|-2$ if $X$ is finite, where $\mathcal{P}(X)$ denotes the set of all subsets of $X$. Thus, when $X$ is a finite set, $Y$ is also a finite set and their cardinalities are equal.   
In particular, if the cardinality of $X$ is $1$, then that of $Y$ is $1$. In this case, the claim of this lemma obviously holds. If $|X|$ is $2$, let $x_1,x_2$ and $y_1,y_2$ be two points of $X$ and $Y$, respectively. Then, the sets of atoms of the lattices $\Sigma(X)$ and $\Sigma(Y)$ are $\mathfrak{p}_X=\{ A_X(x_i) \mid i=1,2\}$ and $\mathfrak{p}_Y=\{A_Y(y_i)\mid i=1,2\}$, respectively. Since the restriction of $\Theta$ induces a bijection between $\mathfrak{p}_X$ and $\mathfrak{p}_Y$, we can define a bijection $\theta:X\rightarrow Y$ so that $\Theta(A_X(x_i))=A_Y(\theta(x_i))$, which induces the map $\theta_*=\Theta$. The uniqueness of the map $\theta$ is trivial. \par
For the rest of this proof, we assume that $|X|$ is more than two. 
Now, we show that either $\Theta(\mathfrak{n}_X)=\mathfrak{n}_Y,~\Theta(\mathfrak{m}_X)=\mathfrak{m}_Y$ or $\Theta(\mathfrak{n}_X)=\mathfrak{m}_Y,~\Theta(\mathfrak{m}_X)=\mathfrak{n}_Y$ holds. 
Assume that $\Theta$ sends an element $p$ of $\mathfrak{n}_X\cup\mathfrak{m}_X$ to $\mathfrak{l}_Y$. Then, there exists $\Theta(q)\in\mathfrak{l}_Y$ such that $t(\Theta(p),\Theta(q))=4$. It is clear that the lattice isomorphism $\Theta$ preserves the type $t$, and thus $t(p,q)$ is 4, which contradicts the type $t(p,q)$ is at most 3 for any $q$ (see Table \ref{type}). Therefore, we obtain $\Theta(\mathfrak{n}_X\cup\mathfrak{m}_X)\subset\mathfrak{n}_Y\cup\mathfrak{m}_Y$ and the same argument for $\Theta^{-1}$ shows that this inclusion is equality.  
Next, assume that two distinct elements $p,q\in\mathfrak{n}_X$ satisfy $\Theta(p)\in\mathfrak{n}_Y,\Theta(q)\in\mathfrak{m}_Y$. Then, the type $t(\Theta(p),\Theta(q))=2$ by Table \ref{type}. On the other hands, $t(p,q)=3$ since both $p,q$ belong to $\mathfrak{n}_X$. This is a contradiction since $\Theta$ preserves the type. Thus either $\Theta(\mathfrak{n}_X)\subset \mathfrak{n}_Y$ or $\Theta(\mathfrak{n}_X)\subset \mathfrak{m}_Y$ holds. A similar argument also shows that either $\Theta(\mathfrak{m}_X)\subset\mathfrak{m}_Y$ or $\Theta(\mathfrak{m}_X)\subset\mathfrak{n}_Y$ holds. Combining the above three arguments, we conclude that either of the following holds.
\begin{itemize}
\item[(1)]~$\Theta(\mathfrak{n}_X)=\mathfrak{n}_Y$ and $\Theta(\mathfrak{m}_X)=\mathfrak{m}_Y$,
\item[] or
\item[(2)]~$\Theta(\mathfrak{n}_X)=\mathfrak{m}_Y$ and $\Theta(\mathfrak{m}_X)=\mathfrak{n}_Y$.
\end{itemize}
First, we consider the case of (1). Since $\Theta(\mathfrak{n}_X)=\mathfrak{n}_Y$, we can define a bijection $\theta:X\rightarrow Y$ so that $\Theta(A(x))=A(\theta(x))$ holds for every $x\in X$. Now let $D$ and $D'$ be proper subsets of $X$ and $Y$, respectively such that $\Theta(A(D))=A(D')$ holds. Since the lattice isomorphism $\Theta$ preserves the inequality $A(D)\subset \bigvee_{x\in D}A(x)$, we have $A(D') \subset \bigvee_{x\in D} A(\theta(x))$. The supremum $\bigvee_{x\in D}A(\theta(x))$ is $\mathcal{P}(\theta(D))\cup\{Y\}$, which implies that the subset $\theta(D)$ contains $D'$. Combining with the same argument for $\Theta^{-1}$ and $\theta^{-1}$ we obtain  $D'=\theta(D)$. Therefore, the two maps $\Theta$ and $\theta_*$ coincide when they are restricted to the atoms $\mathfrak{p}_X$, which implies $\Theta=\theta_*$ since $\Sigma(X)$ is an atomic lattice. \\
Next, for the case of (2), the map $\Theta$ sends the topology $\bigvee\mathfrak{n}_X$ to $\bigvee\mathfrak{m}_Y$. When the cardinality of $Y$ is infinity, these supremums are 
\begin{align*}
&\bigvee\mathfrak{n}_X=\mathcal{P}(X), \text{and}\\
&\bigvee\mathfrak{m}_Y=\{Y,\emptyset\}\cup\{Y\setminus F,~F:\text{finite}\},
\end{align*}
and thus the case (2) does not happen. 
When $|Y|=|X|$ is finite, the map $C:\mathcal{P}(Y)\rightarrow\mathcal{P}(Y)$ is a lattice isomorphism when we restrict $C$ to $\Sigma(Y)$. Then, we have $C\circ \Theta(\mathfrak{n}_X)=\mathfrak{n}_Y$ and $C\circ\Theta(\mathfrak{m}_X)=\mathfrak{m}_Y$. Thus, by applying the argument of (1) for the composition $C\circ\Theta$, we obtain the map $\theta$ which satisfies the claim of this lemma.\par
Lastly, we show the uniqueness of $\theta$. Let $\theta_1,\theta_2:X\rightarrow Y$ be bijections satisfying the condition of the lemma.
By definition, every induced map $\theta_*$ by a bijection $\theta$ sends $A_X(D)$ to $A_Y(\theta(D))$. Thus, we have $|D|=|\theta(D)|$. This implies that $\theta_*$ sends every topology of $\mathfrak{n}_X$ to that of $\mathfrak{n}_Y$, and the induced map does not coincide with $C$ since $|Y|$ is more than 2. Therefore,   
we have ${\theta_1}_*=\Theta={\theta_2}_*$ or ${\theta_1}_*=C\circ\Theta={\theta_2}_*$. Therefore, for every $x\in X$, the equality $A(\theta_1(x))={\theta_1}_*(A(x))={\theta_2}_*(A(x))=A(\theta_2(x))$ holds, which implies $\theta_1(x)=\theta_2(x)$. 
\end{proof}
%-------------------------
\subsection{Proof of Theorem \ref{main thm} and corollaries}
\begin{proof}[Proof of Theorem \ref{main thm}]
By Lemma \ref{Hartmanis}, there exists a bijection $\phi_0:X\rightarrow Y$ such that ${\phi_0}_*$ is $\Phi$ or $C\circ \Phi$.\par
We first consider the case of ${\phi_0}_*=\Phi$. We set $y_0=\phi_0(0)$ and $\phi=\phi_0-y_0$ so that $\phi(0)=0$. It suffices to show that $\phi$ is induced by a $\psi$-semilinear map, where $\psi:K\rightarrow L$ is an isomorphism between the topological fields. The translation map $y\mapsto y+y_0$ is a homeomorphism from $(Y,T)$ to itself for every $T\in\tau_L(Y)$. Thus, the image of $\tau_K(X)$ by the induced map $\phi_*:\Sigma(X)\rightarrow \Sigma(Y)$ is $\tau_L(Y)$. \\
We show that the image of a subspace $S$ of $X$ by $\phi$ is a subspace of $Y$. By Lemma \ref{composition} and Definition \ref{mathfrakS}, the image $\phi(S)=\phi(\mathfrak{S}_X\circ \mathfrak{T}_X(S))$ is the intersection $\bigcap_{0\in U\in \mathfrak{T}_X(S)}\phi(U)$. Since $\phi(0)=0$, this set is equal to $\bigcap_{0\in V \in \phi_*(\mathfrak{T}_X(S))}V=\mathfrak{S}_Y(\phi_*(\mathfrak{T}_X(S)))$, which implies that $\phi(S)$ is a subspace of $Y$. By the same argument for $\phi^{-1}$, we conclude that $\phi$ induces an isomorphism between the lattices of subspaces $\sigma_K(X)$ and $\sigma_L(Y)$.\\
For an element $a$ of $X$ and a subspace $S$ of $X$, we show that $\phi(a+S)\subset \phi(a)+\phi(S)$ by a contradiction. Assume that there exists an element $x\in S$ such that $y=\phi(a+x)\in\phi(a+S)
$ does not belong to $\phi(a)+\phi(S)$. Since $\phi(S)=\mathfrak{S}_Y(\phi_*(\mathfrak{T}_X(S)))$ holds and the translation map $y\mapsto  y+\phi(a)$ is a homeomorphism, we have
\begin{align*}
\phi(a)+\phi(S)=\bigcap_{\phi(a)\in V\in\phi_*(\mathfrak{T}_X(S))}V=\bigcap_{a\in U\in \mathfrak{T}_X(S)}\phi(U).
\end{align*}
Thus, there exists an open neighborhood $U\in \mathfrak{T}_X(S)$ of $a$ such that $a+x\not\in U$. On the other hand, the open set $U$ is $S$-invariant, that is, $U=U+S$ holds, which implies that $U$ is an open neighborhood of $a+x$, and this is a contradiction. Therefore, we obtain $\phi(a+S)\subset \phi(a)+\phi(S)$.
By combining the same argument for $\phi^{-1}$, we conclude that $\phi(a+S)=\phi(a)+\phi(S)$ holds for every $a\in X$ and $S\in\sigma_K(X)$. In particular, the bijection $\phi$ sends parallel lines of $X$ to that of $Y$. By applying the fundamental theorem of affine geometry (Theorem \ref{lemmaFTAG}) for $\phi$, we deduce that $\phi:X\rightarrow Y$ is a semiaffine map. Namely, there exists a field isomorphism $\psi:K\rightarrow L$ and $\phi$ is a composition of $\psi$-semilinear isomorphism and a translation. Since we define $\phi$ so that $\phi(0)=0$, the translation is trivial and $\phi$ is a $\psi$-semilinear isomorphism.\\
To see that $\psi:K\rightarrow L$ is a homeomorphism, we fix a non-zero element $x_0\in X$. Since $\phi_*(T^{\max}_X)=T^{\max}_Y$, the map $\phi:(X,T^{\max}_X)\rightarrow(Y,T^{\max}_Y)$ is a homeomorphism. Thus, the restriction of $\phi$ to the subspace $\langle x_0\rangle$ is a homeomorphism between $\langle x_0\rangle$ and $\langle\phi(x_0)\rangle$, where $\langle x_0\rangle$ and $\langle\phi(x_0)\rangle$ are 1-dimensional subspaces generated by $x_0$ and $\phi(x_0)$, respectively.
 By the identifications $K\ni \alpha \mapsto \alpha\cdot x_0\in \langle x_0\rangle$ and $L\ni \beta \mapsto \beta \cdot\phi(x_0)\in\langle\phi(x_0)\rangle$, the subspaces $\langle x_0\rangle$ and $\langle\phi(x_0)\rangle$ are homeomorphic to $K$ and $L$, respectively, and the restriction coincides with $\psi$. Therefore, the map $\psi$ is an isomorphism between topological fields $K$ and $L$. \par
Next, we consider the case when ${\phi_0}_*$ is $C\circ \Phi$. In this case, by Lemma \ref{Hartmanis}, this case happens when the cardinality of $Y$ is finite. Thus $Y$ is finite-dimensional and the coefficient field $L$ is a finite discrete topological field since we assume that $L$ is a Hausdorff space. Moreover, since $L$ is a discrete topological field and $Y$ is a finite set, the discrete topology is the only Hausdorff vector topology on $Y$.\\
Now, it is known that if $Y$ admits only one Hausdorff vector topology, the lattice $\tau_L(Y)$ of vector topologies is isomorphic to the lattice $\sigma_L(Y)$ of $L$-subspaces of $Y$ by $\mathfrak{S}_Y$ and $\mathfrak{T}_Y$ (see \cite[Lemma 2.9]{T.Aoy}). 
Thus, for an arbitrary element $T$ of $\tau_L(Y)$, there exists a unique subspace $S$ such that 
$
T=\{ U+S \mid U\in T^{\max}_Y\}
$
holds. Moreover, since $T^{\max}_Y$ is a discrete topology, $T$ is generated by a base $\{y+S \mid y\in Y\}$, which implies that $C(T)=T$. Therefore, the map $C$ preserves the set $\tau_L(Y)$,  and the map $\Phi':\Sigma(X)\rightarrow\Sigma(Y)$ defined by $C\circ\Phi$ also satisfies the condition of $\Phi$. Namely, $\Phi':\Sigma(X)\rightarrow\Sigma(Y)$ is a lattice isomorphism and $\Phi'(\tau_K(X))=\tau_L(Y)$. Thus the same argument in the first part of this proof for $\Phi'$ and $\phi_0$ shows that there exists the required triple $(\psi,\phi,y_0)$, which also satisfies $(\phi+y_0)_*=\Phi'=C\circ \Phi$. \par
We complete the proof by showing the uniqueness of the triple $(\psi,\phi,y_0)$. Let $(\psi_1,\phi_1,y_1)$ and $(\psi_2,\phi_2,y_2)$ be two triples satisfying the condition. By the same argument in the last part of the proof of Lemma \ref{Hartmanis}, we have $(\phi_1+y_1)=(\phi_2+y_2)$. By substituting $0$, we obtain $y_1=y_2$ and $\phi_1=\phi_2$. Since $\psi_1$ and $\psi_2$ satisfy $\psi_i(\alpha)\phi_i(x)=\phi_i(\alpha \cdot x),~i=1,2$ for every $\alpha\in K,~x\in X$, The equality $\phi_1=\phi_2$ implies $\psi_1=\psi_2$.
\end{proof}

By using Theorem \ref{main thm}, we study the group of lattice automorphisms which preserves the lattice of vector topologies. We introduce notations:

\begin{dfn}
Let $X$ be a vector space over a topological field $K$ with $\dim_K X\geq2$. We denote by ${\rm Aut}(\Sigma(X),\tau_K(X))$, the subgroup of the group of lattice automorphisms on $\Sigma(X)$ preserving $\tau_K(X)$:
\begin{align*}
{\rm Aut}(\Sigma(X),\tau_K(X))=\{\Phi \in {\rm Aut}(\Sigma(X))\mid \Phi(\tau_K(X))=\tau_K(X)\}.
\end{align*}
We also denoted by $\Gamma L_h(X)$, the subgroup of the group of semilinear automorphisms on $X$ defined by
\begin{align*}
\Gamma L_h(X)=\{\phi:X\rightarrow X \mid \text{$\psi$-semilinear for a homeomorphic isomorphism $\psi:K\rightarrow K$}\}.
\end{align*}
\end{dfn}

As a consequence of Theorem \ref{main thm}, we obtain the following theorem: 
\begin{thm}\label{decompose}
Let $X$ be a vector space over a topological field with $\dim_KX\geq2$. If the cardinality of $X$ is finite, then the group ${\rm Aut}(\Sigma(X),\tau_K(X))$ is isomorphic to $(X\rtimes\Gamma L_h(X))\times\mathbb{Z}/2\mathbb{Z}$ by
\begin{align*}
F:(X\rtimes\Gamma L_h(X))\times\mathbb{Z}/2\mathbb{Z}\ni((x,\phi),\epsilon)\mapsto C^{\epsilon}\circ(\phi+x)_* \in {\rm Aut}(\Sigma(X),\tau_K(X)),
\end{align*}
where $C^0={\rm id}_{\Sigma(X)},~C^1=C$ and the group operation of $X\rtimes\Gamma L_h(X)$ is defined by $(x_1,\phi_1)\cdot(x_2,\phi_2)=(x_1+\phi_1(x_2),\phi_1\circ\phi_2)$.\\
If the cardinality of $X$ is infinite, then ${\rm Aut}(\Sigma(X),\tau_K(X))$ is isomorphic to $X\rtimes\Gamma L_h(X)$ by
\begin{align*}
G:X\rtimes\Gamma L_h(X)\ni(x,\phi)\mapsto(\phi+x)_* \in {\rm Aut}(\Sigma(X),\tau_K(X)).
\end{align*}
\end{thm}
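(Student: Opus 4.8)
The plan is to defer all geometric content to Theorem~\ref{main thm} and reduce the statement to purely group-theoretic bookkeeping about induced maps and the complement map $C$. I verify in turn that $F$ (resp. $G$) is well-defined, a homomorphism, injective, and surjective. Two functoriality identities for induced maps drive every computation. First, for bijections $f,g\colon X\to X$ one has $(f\circ g)_*=f_*\circ g_*$, which is immediate from $(f\circ g)^{-1}=g^{-1}\circ f^{-1}$ and the definition of $(-)_*$. Second, $C$ commutes with every induced map, $C\circ f_*=f_*\circ C$, because $f^{-1}(X\setminus V)=X\setminus f^{-1}(V)$ for a bijection $f$. I also record that $C^2={\rm id}_{\Sigma(X)}$ and that, by the uniqueness clause of Lemma~\ref{Hartmanis} (applicable since $\dim_K X\geq 2$ forces $|X|>2$), the map $C$ is not of the form $\theta_*$ for any bijection $\theta$; thus $C$ is a central involution of ${\rm Aut}(\Sigma(X))$ lying outside the set of induced maps.

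For well-definedness, note that a translation is a homeomorphism for every vector topology and that a $\psi$-semilinear automorphism with $\psi$ a homeomorphism sends $\tau_K(X)$ onto itself by restricting $\phi_*$; hence $(\phi+x)_*$ preserves $\tau_K(X)$ for each $(x,\phi)\in X\rtimes\Gamma L_h(X)$, so $G$ and the $\epsilon=0$ part of $F$ land in ${\rm Aut}(\Sigma(X),\tau_K(X))$. In the finite case $K$ is a finite Hausdorff, hence discrete, field, and the computation already made in the proof of Theorem~\ref{main thm} shows $C(T)=T$ for every $T\in\tau_K(X)$; therefore $C$ preserves $\tau_K(X)$ and the $\epsilon=1$ part of $F$ lands in the group as well. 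The homomorphism property then follows from the first functoriality identity: the composite $(\phi_1+x_1)\circ(\phi_2+x_2)$ equals the map $(\phi_1\circ\phi_2)+(x_1+\phi_1(x_2))$, which is exactly the image under the assignment $(x,\phi)\mapsto(\phi+x)$ of the semidirect-product multiplication $(x_1,\phi_1)\cdot(x_2,\phi_2)$; for $F$ the centrality of $C$ lets the $C^{\epsilon}$ factors be gathered and $C^2={\rm id}$ converts their product into addition in $\mathbb{Z}/2\mathbb{Z}$.

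Surjectivity is precisely Theorem~\ref{main thm} with $Y=X$ and $L=K$: any $\Phi$ in the group equals $(\phi+y_0)_*$, or in the finite case possibly $C\circ(\phi+y_0)_*$, where $\psi$ is a topological field automorphism (so $\phi\in\Gamma L_h(X)$) and $y_0\in X$; these are $G((y_0,\phi))$ and $F(((y_0,\phi),\epsilon))$ respectively. For injectivity, if $C^{\epsilon}\circ(\phi+x)_*={\rm id}$ then $\epsilon=1$ is impossible, as it would force $(\phi+x)_*=C$ against $C$ not being induced; hence $\epsilon=0$, so $(\phi+x)_*={\rm id}$, and the uniqueness clause of Lemma~\ref{Hartmanis} gives $\phi+x={\rm id}_X$, whence evaluation at $0$ yields $x=0$ and then $\phi={\rm id}$.

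With Theorem~\ref{main thm} available the remaining work is essentially routine verification; the one genuinely structural point, and the place I would be most careful, is that $C$ splits off as a \emph{central} direct factor of order two rather than entangling with the affine part $X\rtimes\Gamma L_h(X)$. This is what distinguishes the direct product in the finite case from a semidirect or nonsplit extension, and it rests entirely on $C$ commuting with all induced maps while itself failing to be induced.
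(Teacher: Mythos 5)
Your proposal is correct and follows essentially the same route as the paper: well-definedness via the preservation of $\tau_K(X)$ by translations, by semilinear maps with homeomorphic adjoint, and by $C$ in the finite case; the homomorphism property via $(f\circ g)_*=f_*\circ g_*$ and the commutation of $C$ with induced maps; and bijectivity from Theorem~\ref{main thm}. The only difference is cosmetic: you spell out the injectivity argument (using that $C$ is not an induced map, a fact established inside the proof of Lemma~\ref{Hartmanis}), where the paper simply cites the uniqueness clause of Theorem~\ref{main thm}.
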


\begin{proof}
Since every element of $\Gamma L_h(X)$ is a $\psi$-semilinear map for a homeomorphism $\psi:K\rightarrow K$, the restriction $\phi_*\restriction_{\tau_K(X)}$ is a map from $\tau_K(X)$ to itself. Also, translations $X\ni x\mapsto x+a\in X$ are homeomorphisms with respect to every vector topology. Thus the image of $\tau_K(X)$ by $(\phi+x)_*$ is again $\tau_K(X)$. Moreover, in the proof of Theorem \ref{main thm}, the complement map $C$ is an  identity map on $\tau_K(X)$ when $X$ is a finite set. Thus $F(x,\phi,\epsilon)$ and $G(x,\phi)$ are elements of ${\rm Aut}(\Sigma(X),\tau_K(X))$. \\
For $(x_1,\phi_1),(x_2,\phi_2)\in X\rtimes\Gamma L_h(X)$, since $\phi_1$ preserves the addition, we have
\begin{align*}
(\phi_1+x_1)_*\circ(\phi_2+x_2)_*&=\{(\phi_1+x_1)\circ(\phi_2+x_2)\}_*\\
                                             &=\{\phi_1\circ\phi_2 +x_1+\phi_1(x_2)\}_*.  
\end{align*}
Moreover, if $X$ is a finite set, the induced map $f_*:\Sigma(X)\rightarrow\Sigma(X)$ by a bijection $f:X\rightarrow X$ commutes with $C$ from the following equality:
\begin{align*}
f_*\circ C(T)&=f_*(\{X\setminus U\mid U\in T\})\\
                  &=\{f(X\setminus U) \mid U\in T\}\\
                  &=\{X\setminus f(U)\mid U\in T\}\\
                  &=C\circ f_*(T).
\end{align*}
Therefore, $F$ and $G$ are group homomorphisms. Lastly, by Theorem \ref{main thm}, we obtain that two maps $F,G$ are bijections. 
\end{proof}

\section{Lattice of vector topologies and Hausdorff vector topologies}

Next, we consider the lattice $\Sigma_1(X)$ of all ${\rm T_1}$-topologies on $X$ as an analogy of Corollary \ref{cor main thm}. Namely, is it true that if there exists an isomorphism $\Phi:\Sigma_1(X)\rightarrow\Sigma_1(Y)$ between lattices of ${\rm T_1}$-topologies on vector spaces $X,Y$ over topological fields $K,L$, respectively, such that the image of $\tau_K(X)\cap\Sigma_1(X)$ is $\tau_L(Y)\cap\Sigma_1(Y)$, then $K$ and $L$ are isomorphic, and $X$ and $Y$ have the same dimension? \\
In \cite[Theorem 5]{J.Har} and \cite{D.Ell}, it is shown that the group of lattice automorphisms of $\Sigma_1(X)$ on an infinite set $X$ is isomorphic to the symmetric group of $X$. 
However, this question is negatively answered by the following example.
\begin{exa}\label{only compatible}
Let $X$ be the $\mathbb{R}$-vector space $\mathbb{R}^4$ and $Y$ be the $\mathbb{C}$-vector space $\mathbb{C}^2$. It is clear that $\mathbb{R}$ and $\mathbb{C}$ are not isomorphic, and $X$ and $Y$ have different dimensions. However, it is known that ${\rm T_1}$-vector topologies are Hausdorff and that every finite-dimensional vector space over a non-trivial complete valued field admits only one Hausdorff vector topology (see \cite[\S 2, No.3, Theorem 2]{N.Bou}). Thus, $\tau_K(X)\cap\Sigma_1(X)$ and $\tau_L(Y)\cap\Sigma_1(Y)$ consist of only one elements: the Euclidean topologies. Therefore, any homeomorphism $\phi$ between $\mathbb{R}^4$ and $\mathbb{C}^2$ with respect to the Euclidean topologies induces the map $\phi_*:\Sigma_1(X)\rightarrow\Sigma_1(Y)$ such that the image of $\tau_K(X)\cap\Sigma_1(X)$ is $\tau_L(Y)\cap\Sigma_1(Y)$.
\end{exa}
\par
The restriction of $\Phi$ in Theorem \ref{main thm} to $\tau_K(X)$ is a lattice isomorphism between $\tau_K(X)$ and $\tau_L(Y)$. 
Thus, it is natural to consider whether a lattice isomorphism between the lattices of vector topologies implies $K\cong L$ and $X\cong Y$ or not. 
This is not true in general, because it is shown in \cite[Theorem 3.3]{T.Aoy}, that the lattice of vector topologies of $\mathbb{Q}^n$ is isomorphic to that of $\mathbb{R}^n$, which is also isomorphic to $(\sigma_{\mathbb{R}}(\mathbb{R}^n), \supset)$.\\
However, if we consider an isomorphism between lattices of vector topologies which preserves Hausdorff vector topologies, we recover algebraic aspects of Corollary \ref{cor main thm}.
\begin{thm}\label{comatible topologies with Hausdroff}
Let $K$ and $L$ be topological fields and let $X$ and $Y$ be vector spaces over $K$ and $L$, respectively, with $\dim_K(X)\geq3$. Suppose that there exists a lattice isomorphism $\Phi:\tau_K(X)\rightarrow\tau_L(Y)$ such that $\Phi(\tau_K^H(X))=\tau_L^H(Y)$. Then, $K$ is isomorphic to $L$ as fields {\rm(}not necessarily as topological fields{\rm)}, and $X$ and $Y$ have the same dimension.
\end{thm}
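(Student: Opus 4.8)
The plan is to transport $\Phi$ to the subspace lattices through the antitone Galois connection $(\mathfrak{S},\mathfrak{T})$ and then invoke the fundamental theorem of projective geometry (Theorem \ref{lemmaFTPG}), which applies since $\dim_K X\geq 3$. Write $\mathcal{C}_X=\mathfrak{T}_X(\sigma_K(X))$ for the set of topologies in the image of $\mathfrak{T}_X$. By Lemma \ref{composition}(1), $\mathfrak{T}_X$ is an order-reversing bijection of $\sigma_K(X)$ onto $\mathcal{C}_X$ whose inverse is the restriction of $\mathfrak{S}_X$; being the relevant half of a Galois connection between complete lattices, it sends arbitrary joins of subspaces to arbitrary meets in $\tau_K(X)$, so $\mathcal{C}_X$ is meet-closed and $\mathfrak{T}_X(S)=\bigwedge_{\ell\subseteq S}\mathfrak{T}_X(\ell)$, the meet running over all $1$-dimensional $\ell\subseteq S$. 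Hence $\mathcal{C}_X$ is exactly the meet-closure of $\{\mathfrak{T}_X(\ell):\ell\text{ a line}\}$ (the empty meet being $T^{\max}_X$). If I can show $\Phi(\mathcal{C}_X)=\mathcal{C}_Y$, then $\mathfrak{S}_Y\circ\Phi\circ\mathfrak{T}_X\colon\sigma_K(X)\to\sigma_L(Y)$ is an order isomorphism (the two order-reversals cancel against the order-preserving $\Phi$), hence a lattice isomorphism, and Theorem \ref{lemmaFTPG} applies.

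The crux—and the only place the Hausdorff hypothesis enters—is an intrinsic description of the generators $\mathfrak{T}_X(\ell)$ in terms of the lattice order together with the distinguished subset $\tau_K^H(X)$: I claim they are precisely the maximal elements of $\tau_K(X)\setminus\tau_K^H(X)$. First, $\mathfrak{T}_X(\ell)$ is non-Hausdorff since $\mathfrak{S}_X(\mathfrak{T}_X(\ell))=\ell\neq 0$ (Lemma \ref{composition}(1),(3)); and if $T'\supsetneq\mathfrak{T}_X(\ell)$ then order-reversal gives $\mathfrak{S}_X(T')\subseteq\ell$, where the possibility $\mathfrak{S}_X(T')=\ell$ is excluded (it would force $T'\leq\mathfrak{T}_X(\ell)$ by the Galois property), so $\mathfrak{S}_X(T')=0$ and $T'$ is Hausdorff by Lemma \ref{composition}(3). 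Thus every strict refinement of $\mathfrak{T}_X(\ell)$ is Hausdorff, i.e.\ $\mathfrak{T}_X(\ell)$ is maximal non-Hausdorff. Conversely, any non-Hausdorff $T$ has $\mathfrak{S}_X(T)\neq 0$ (Lemma \ref{composition}(3)); choosing a line $\ell\subseteq\mathfrak{S}_X(T)$ and using Lemma \ref{composition}(2) together with order-reversal yields $T\leq\mathfrak{T}_X(\mathfrak{S}_X(T))\leq\mathfrak{T}_X(\ell)$, so every non-Hausdorff element lies below some non-Hausdorff $\mathfrak{T}_X(\ell)$; maximality then forces equality. As $\Phi$ preserves the order and, by hypothesis, the subset $\tau^H$, it maps maximal non-Hausdorff elements to maximal non-Hausdorff elements, so $\Phi$ restricts to a bijection $\{\mathfrak{T}_X(\ell)\}\to\{\mathfrak{T}_Y(\ell')\}$.

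Granting this, since $\Phi$ is an isomorphism of complete lattices it preserves arbitrary meets, hence carries the meet-closure of the $\mathfrak{T}_X(\ell)$ to that of the $\mathfrak{T}_Y(\ell')$; that is, $\Phi(\mathcal{C}_X)=\mathcal{C}_Y$. The resulting lattice isomorphism $\sigma_K(X)\cong\sigma_L(Y)$ gives, by Theorem \ref{lemmaFTPG}, a field isomorphism $\psi\colon K\to L$ and a $\psi$-semilinear isomorphism $X\to Y$. A semilinear isomorphism preserves linear independence, so $\dim_K X=\dim_L Y$, and $\psi$ realizes $K\cong L$ as fields. Note that Theorem \ref{lemmaFTPG} furnishes only an \emph{abstract} field isomorphism, with no continuity, which is exactly why the conclusion claims an algebraic and not a topological isomorphism (and is consistent with Example \ref{not necessarily continuous}).

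I expect the main obstacle to be precisely the characterization of the $\mathfrak{T}_X(\ell)$ as the maximal non-Hausdorff topologies, together with verifying that such maximal elements exist and are surjectively reached from below; both rest on Lemma \ref{composition}(2),(3) and on the elementary fact that a subspace contained in a line is either $0$ or the whole line. Once this is in place, the remainder is formal Galois-connection bookkeeping and a single application of the projective fundamental theorem.
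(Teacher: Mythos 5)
Your proposal is correct, and it takes a genuinely different route from the paper's. The paper also forms $F=\mathfrak{S}_Y\circ\Phi\circ\mathfrak{T}_X$ and $G=\mathfrak{S}_X\circ\Phi^{-1}\circ\mathfrak{T}_Y$, but instead of identifying the image of $\mathfrak{T}_X$ inside $\tau_K(X)$, it proves $G\circ F(S)\subset S$ and $F\circ G(S')\subset S'$ and then runs an induction on dimension, using the Hausdorff hypothesis only in the base cases $d=0,1$ to show that $F$ and $G$ restrict to mutually inverse bijections $\sigma_K^d(X)\leftrightarrow\sigma_L^d(Y)$ for every finite $d$; it therefore only obtains an isomorphism of the sublattices of finite-dimensional subspaces and must invoke Remark \ref{remarkFTPG} rather than Theorem \ref{lemmaFTPG} itself. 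Your argument replaces the induction by an intrinsic, purely order-theoretic characterization of the topologies $\mathfrak{T}_X(\ell)$ as the maximal elements of $\tau_K(X)\setminus\tau_K^H(X)$ (both directions of which I checked: they follow from Lemma \ref{composition}(1)--(3) and order-reversal, with the key exclusion $\mathfrak{S}_X(T')=\ell$ handled by $T'\subset\mathfrak{T}_X(\mathfrak{S}_X(T'))$), together with the identity $\mathfrak{T}_X\bigl(\sum_i S_i\bigr)=\bigwedge_i\mathfrak{T}_X(S_i)$, the meet taken in $\tau_K(X)$ and preserved by $\Phi$ since $\tau_K(X)$ and $\tau_L(Y)$ are complete lattices. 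This buys you the isomorphism $\sigma_K(X)\cong\sigma_L(Y)$ on the \emph{entire} subspace lattices, so Theorem \ref{lemmaFTPG} applies directly, and it makes transparent exactly where the hypothesis $\Phi(\tau_K^H(X))=\tau_L^H(Y)$ enters; the price is that you rely on the Galois-connection identity for arbitrary joins, which the paper only asserts implicitly in its remark, so you should spell out that it follows from Lemma \ref{composition}(1),(2) (as your own maximality argument already does in miniature). No gap; both proofs are sound.
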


\begin{proof}
For a non-negative integer $d$, we denote by $\sigma_K^d(X)$ and $\sigma_L^d(Y)$, the set of $d$-dimensional subspaces of $X$ and $Y$, respectively. We also denote by $\sigma_K^{<\infty}(X)$ and $\sigma_L^{<\infty}(Y)$, the sublattice of $\sigma_K(X)$ and $\sigma_L(Y)$ consisting of finite-dimensional subspaces of $X$ and $Y$, respectively. By the fundamental theorem of projective geometry (Theorem \ref{lemmaFTPG}) and Remark \ref{remarkFTPG}, it is enough to construct a lattice isomorphism between $\sigma_K^{<\infty}(X)$ and $\sigma_L^{<\infty}(Y)$. We define two maps $F:\sigma_K(X)\rightarrow\sigma_L(Y)$ and $G:\sigma_L(Y)\rightarrow\sigma_K(X)$ as the following compositions:
\begin{align*}
F:\sigma_K(X)\xrightarrow{\mathfrak{T}_X}\tau_K(X)\xrightarrow{\Phi}\tau_L(Y)\xrightarrow{\mathfrak{S}_Y}\sigma_L(Y),\\
G:\sigma_L(Y)\xrightarrow{\mathfrak{T}_Y}\tau_L(Y)\xrightarrow{\Phi^{-1}}\tau_K(X)\xrightarrow{\mathfrak{S}_X}\sigma_K(X).
\end{align*}
By definition, $\mathfrak{T}_X,\mathfrak{T}_Y,\mathfrak{S}_X,\mathfrak{S}_Y$ invert the inclusion $\subset$. Thus, $F$ and $G$ preserve the inclusion order. By (2) of Lemma \ref{composition}, the topology $\Phi\circ\mathfrak{T}_X(S)$ is included in $\mathfrak{T}_Y\circ\mathfrak{S}_Y\circ\Phi\circ\mathfrak{T}_X(S)$. Thus, we have
\begin{align*}
G\circ F(S)&=\mathfrak{S}_X\circ\Phi^{-1}\circ\mathfrak{T}_Y\circ\mathfrak{S}_Y\circ\Phi\circ\mathfrak{T}_X(S)\\
&\subset \mathfrak{S}_X\circ\Phi^{-1}\circ\Phi\circ\mathfrak{T}_X(S)\\
&=\mathfrak{S}_X\circ\mathfrak{T}_X(S).
\end{align*}
By (1) of Lemma \ref{composition}, we have $\mathfrak{S}_X\circ\mathfrak{T}_X(S)=S$, and we obtain $G\circ F(S)\subset S$. The same argument shows that $F\circ G(S)\subset S$ holds.
Now, we show by induction on the non-negative integer $d$, that the restrictions of $F$ and $G$ to $\sigma_K^d(X)$ and $\sigma_L^d(Y)$ are bijections.\par
(The base case)~For the case of $d=0$, the $0$-dimensional subspace is mapped to $T^{\max}_X$ by $\mathfrak{T}_X$. Since $T^{\max}_X$ is the maximum element, $\Phi$ sends $T^{\max}_X$ to $T^{\max}_Y$, and Hausdorff topology $T^{\max}_Y$ is mapped to $0$-dimensional subspace of $Y$ by (3) of Lemma \ref{composition}. Thus, the restriction of $F$ to $\sigma_K^0(X)$ is a bijection between $\sigma_K^0(X)$ and $\sigma_L^0(Y)$. \\
We show that the case of $d=1$ also holds. First, assume that two different $1$-dimensional subspaces $S_1,S_2$ of $X$ are mapped to the same subspace $S'=F(S_1)=F(S_2)$. Then, $G\circ F(S_1)=G\circ F(S_2)$ is included in $S_1\cap S_2=\{0\}$, which implies that $\Phi^{-1}\circ\mathfrak{T}_Y(S')$ is a Hausdorff topology by (3) of Lemma \ref{composition}. Since $\Phi$ preserves Hausdorff vector topologies, $\mathfrak{T}_Y(S')$ is also a Hausdorff topology. Thus, by (3) of Lemma \ref{composition}, $S'$ is $0$-dimensional, but this contradicts that $S_1$ is mapped to a non-Hausdorff topology by $\Phi\circ\mathfrak{T}_X$. Hence $F\restriction_{\sigma_K^1(X)}$ is an injection, and the same argument shows that $G\restriction_{\sigma_L^1(Y)}$ is also an injection.  
Next, assume that $\dim_KF(S)\geq2$ for $S\in\sigma_K^1(X)$. Then, we have two different subspaces $S'_1,S'_2\in\sigma_L^1(Y)$ which are subspaces of $F(S)$. Since $G$ preserves the inclusion and $G\circ F(S)\subset S$, we have $G(S'_1)$ and $G(S'_2)$ are subspaces of $S$. Since $\Phi$ preserves Hausdorff vector topologies, $G(S'_1)$ and $G(S'_2)$ are not $0$-dimensional, and thus they coincide with $S$. This contradicts that $G\restriction_{\sigma_L^1(Y)}$ is injective. Thus, the restriction of $F$ is a map from $\sigma_K^1(X)$ to $\sigma_L^1(Y)$. Also, the same argument shows that $G\restriction_{\sigma_L^1(Y)}:\sigma_L^1(Y)\rightarrow\sigma_K^1(X)$. This implies that $G\circ F(S)$ and $F\circ G(S')$ is $1$-dimesional subspaces of $S$ and $S'$, respectively for $S\in\sigma_K^1(X),~S'\in\sigma_L^1(Y)$. Thus, $G\circ F$ and $F\circ G$ are identity maps when they are restricted to $1$-dimensional subspaces.\par
(The inductive step)~We assume that the statement holds for $d=0,1,\cdots, d'$ and show that the  case of $d'+1(\geq2)$ also holds.\\
First, we note that $F$ and $G$ do not send a $(d'+1)$-dimensional subspaces to a lower dimensional subspace. Otherwise, if $S\in\sigma_K^{d'+1}(X)$ is sent to a subspace lower than $d'+1$, we have two different $d'$-dimensional subspaces of $S$. They are sent to the same subspace $F(S)$ since $F$ preserves the inclusion. This is a contradiction to our assumption of the induction. \\
Next, we show that $F\restriction_{\sigma_K^{d'+1}(X)}$ is injective. Assume that two different elements $S_1,S_2\in\sigma_K^{d'+1}(X)$ are mapped to the same subspace $S'=F(S_1)=F(S_2)$. Then, we take two different $d'$-dimensional subspaces $S'_1$ and $S'_2$ of $S'$ since $\dim_L(S')\geq d'+1$ holds. Then, $G(S'_1),G(S'_2)$ are subspaces of $G\circ F(S_1)$ and $G\circ F(S_2)$. Moreover, they are subspaces of $S_1\cap S_2$, whose dimension is not greater than $d'$. Thus, $G(S'_1)$ and $G(S'_2)$ coincide, which contradicts the assumption of the induction. Combining with the same argument for $G$ shows that $F\restriction_{\sigma_K^{d'+1}(X)}$ and $G\restriction_{\sigma_L^{d'+1}(Y)}$ are injective. \\
Lastly, we show that $\dim_LF(S)$ is equal to $d'+1$ for $S\in\sigma_K^{d'+1}(X)$. We have already proved that $\dim_LF(S)$ is not less than $d'+1$. Thus, we assume that $\dim_LF(S)$ is greater than $d'+1$. Then, we have two different $(d'+1)$-dimensional subspaces $S'_1,S'_2$ of $F(S)$. They are mapped to subspaces of $S$ by $G$, and their dimensions are $d'+1$ since $G$ does not decrese dimensions. Thus, $G(S'_1)$ and $G(S'_2)$ coincide with $S$, which is a contradiction since $G\restriction_{\sigma_L^{d'+1}(Y)}$ is injective. Therefore, $F\restriction_{\sigma_K^{d'+1}(X)}$ is a map from $\sigma_K^{d'+1}(X)$ to $\sigma_L^{d'+1}(Y)$, and by the same argument, we have  $G\restriction_{\sigma_L^{d'+1}}:\sigma_L^{d'+1}(Y)\rightarrow\sigma_K^{d'+1}(X)$. Thus, the restriction of $F\circ G$ and $G\circ F$ to $(d'+1)$-dimensional subspaces are identity maps, which shows that the case of $d=d'+1$ also holds.
\end{proof}

We end the papar by giving an example, which shows that the map $\psi:K\rightarrow L$ in Theorem \ref{comatible topologies with Hausdroff} may not be continuous. 

\begin{exa}\label{not necessarily continuous}
We fix a prime number $p$. Let $\mathbb{C}_p$ be the completion of the algebraic closure $\overline{\mathbb{Q}_p}$ of the field of $p$-adic numbers $\mathbb{Q}_p$. The $\mathbb{C}_p$ has a complete valuation $|\cdot|_p$. Now, let $K$ be $\mathbb{C}_p$ and $L$ be the complex field $\mathbb{C}$ with the standard absolute value. Let $X,Y$ be 3-dimensional vector spaces defined by $K^3,L^3$, respectively. It is known that the lattice of vector topologies on a finite-dimensional vector space over a non-trivial complete valuation is isomorphic to the lattice of its subspaces (See \cite{E.Ste,T.Aoy}).  Thus, we obtain two isomorphisms:
\begin{align*}
\tau_K(X)\cong\sigma_K(X),~\tau_L(Y)\cong\sigma_L(Y).
\end{align*}
Moreover, it is known that $\mathbb{C}_p$ and $\mathbb{C}$ are isomorphic as fields. Thus, the lattices of subspaces are isomorphic:
\begin{align*}
\sigma_K(X)\cong \sigma_L(Y).
\end{align*}
Therefore, we obtain a lattice isomorphism $\tau_K(X)\cong\tau_L(Y)$ denoted by $\Phi$. It is shown in \cite[\S 2, No.3, Theorem 2]{N.Bou}, that two subsets $\tau_K^H(X)$ and $\tau_L^H(Y)$ are one point sets $\{T^{\max}_X\},~\{T^{\max}_Y\}$ for non-trivial complete valued fields $K,L$. Thus, the image of $\tau_K^H(X)$ by $\Phi$ coincides with $\tau_L^H(Y)$. However, the fields $K$ and $L$ are not homeomorphic by any field isomorphisms.
\end{exa}

%%%%%%%%%%%%%%%%%%%%%%%%%%%%%%%%%%%%%%%%%%%%%%%
\section*{Acknowledgments}
The author would like to express his gratitude to Prof. Ken'ichi Ohshika and Prof. Shinpei Baba for valuable comments. 
He also would like to thank the Research Institute for Mathematical
Sciences (RIMS) for giving him opportunities of research presentations on results in this paper at Kyoto University.

\end{document}